\newcommand{\sm}{\smallskip}
\newcommand{\ms}{\medskip}
\long\def\ignore#1\endignore{\relax}
\newcommand{\lv}[1]{} 
\newcommand{\comments}[1]{}
\newcommand{\arxiv}[1]{\href{http://arxiv.org/abs/#1}{\tt arXiv:\nolinkurl{#1}}}
\newcounter{Cequ}
\newenvironment{CEquation}
  {\stepcounter{Cequ}%
     \addtocounter{equation}{-1}%
     \equation}
   {\endequation}
\newtheorem{theorem}[subsection]{Theorem}
\newtheorem{KMS}[subsection]{Kervaire-Milnor-Steinberg Theorem}
\theoremstyle{definition}
\numberwithin{equation}{subsection}
\newif \ifnew
\newcommand{\dbl}{(\!\!(}
\newcommand{\dbr}{)\!\!)}
\newcommand{\dd}[2]{\dbl #1, \, #2 \dbr}
\newcommand{\DBL}{\Big(\!\!\!\!\Big(}
\newcommand{\DBR}{\Big)\!\!\!\!\Big)}
\def\lan{\langle}
\def\Lan{\big\langle}
\def\ran{\rangle}
\def\Ran{\big\rangle}
\def\llan{\Lan \kern -2.5pt \lan}
\def\rran{\ran \kern -2.5pt \Ran}
\def\llc{\big[\kern -1.5pt [\,}
\def\rrc{\, ]\kern -1.5pt \big]}
\def\ch{\sp{\scriptscriptstyle\vee}}
\def\bu{\bullet}
\newcommand {\me}{^{-1}}
\newcommand{\ti}{^\times}
\def\x{{\rm x}}
\def\e{{\rm e}}
\def\xp{{\rm x}_+} \def\xpp{{\rm x}_+^\prime}
\def\xm{{\rm x}_-} \def\xmp{{\rm x}_-^\prime}
\def\xs{{\rm x}_\sigma}
\def\xms{{\rm x}_{-\sigma}}
\def\b{{\rm b}}
\def\rmw{{\rm w}}
\newcommand{\lra}{\longrightarrow}
\newcommand{\longto}{\longrightarrow}
\def\co{\colon}
\def\ot{\otimes}
\def\inpr{(\kern.2em|\kern.2em)}
\def\edge{\mathrel{\vcenter{\hbox to 1.3em{\hrulefill}}}}
\def\lla{\leftarrow}
\def\lra{\rightarrow}
\def\dotcup{\mathbin{\dot{\cup}}}
\def\alt{\sp{\rm alt}}
\def\her{\sp{\rm her}}
\def\qf{\sp{\rm qf}}
\def\vog{\dashv}
\def\coll{\mathrel{\top}}
\def\usi{_{\sigma}}
\def\umsi{_{ - \sigma}}
\newcommand{\sma}[4]{\big(\begin{smallmatrix} #1 & #2 \\ #3 &#4
              \end{smallmatrix}\big)} 
\newcommand{\pma}[4]{\begin{pmatrix} #1 & #2 \\ #3 &#4
              \end{pmatrix}} 
\newcommand{\RS}{{\bf RS}}
\newcommand{\Aut}{\operatorname{Aut}}
\DeclareMathOperator{\Ad}{Ad} \DeclareMathOperator{\uAd}{\overline{\Ad}}
\DeclareMathOperator{\ad}{ad}
\newcommand{\Alt}{\operatorname{Alt}}
\newcommand{\can}{\operatorname{can}}
\newcommand{\diag}{\operatorname{diag}}
\newcommand{\End}{\operatorname{End}}
\newcommand{\EL}{\operatorname{E}}
\newcommand{\GL}{{\operatorname{GL}}}
\newcommand{\rmH}{\operatorname{H}}
\newcommand{\Id}{\operatorname{Id}}
\newcommand{\Ker}{\operatorname{Ker}}
\newcommand{\Mat}{{\operatorname{Mat}}}
\newcommand{\PE}{{\operatorname{PE}}}
\newcommand{\rank}{\operatorname{rank}}
\newcommand{\SL}{{\operatorname{SL}}}
\newcommand{\Span}{\operatorname{Span}}
\newcommand{\St}{\operatorname{St}}
\DeclareMathOperator{\rmZ}{Z} 
\newcommand\scC{\mathcal{C}}
\newcommand\scS{\mathcal{S}}
\newcommand\scZ{\mathcal{Z}}
\newcommand{\frA}{\mathfrak A}
\newcommand{\fra}{\mathfrak a}
\newcommand{\frb}{\mathfrak b}
\newcommand{\fre}{\mathfrak e}
\newcommand{\gl}{\mathfrak{gl}}
\newcommand{\tkk}{\mathfrak L}
\newcommand{\p}{\mathfrak p}
\newcommand{\R}{\mathfrak R}
\newcommand{\frS}{\mathfrak S}
\newcommand{\lsl}{\mathfrak{sl}}
\newcommand{\frz}{\mathfrak z}
\newcommand{\sfa}{{\sf a}}
\newcommand\sfm{{\sf m}}
 \newcommand\sfx{\sf x}
\newcommand\sfy{\sf y}
\newcommand{\bbA}{{\mathbb A}}
\newcommand{\FF}{{\mathbb F}}
\newcommand{\HH}{{\mathbb H}}
\newcommand{\JJ}{{\mathbb J}}
\newcommand{\MM}{{\mathbb M}}
\newcommand{\NN}{{\mathbb N}}
\newcommand{\QQ}{{\mathbb Q}}
\newcommand{\RR}{{\mathbb R}}
\newcommand{\ZZ}{{\mathbb Z}}
\newcommand\al{\alpha}\newcommand\alp{\al}
\newcommand\be{\beta}
\newcommand\ga{\gamma} \newcommand{\gam}{\ga}
\newcommand\de{\delta}\newcommand\del{\delta}
\newcommand\De{\Delta}\newcommand\Del{\Delta}
\newcommand\eps{\varepsilon} 
\newcommand\io{\iota}
 \newcommand\vphi{\varphi}
\newcommand\si{^\sigma} \newcommand\msi{^{-\sigma}}
\newcommand\sig{\sigma}
\def\rma{\dot {\rm A}}
\def\rmaf{{\rm A}}
\def\rmb{{\rm B}}
\def\rmc{{\rm C}}
\def\rmd{{\rm D}}
\def\rmbc{{\rm BC}}
\def\rme{{\rm E}}
\def\rmf{{\rm F}}
\def\rmg{{\rm G}}
\begin{document}

\title[]{Steinberg groups for Jordan pairs\\ -- an introduction with open problems}

\author{E. Neher}
\address{Department of Mathematics and Statistics, University of Ottawa,
    Ottawa, Ontario K1N 6N5, Canada}
\thanks{The author was partially supported by a Discovery grant from the Natural
Sciences and Engineering Research Council of Canada (NSERC)}
\email{neher@uottawa.ca}

\subjclass[2010]{}

\keywords{Jordan pairs, Steinberg groups}

\begin{abstract}  The paper gives an introduction to Steinberg groups for
root graded Jordan pairs, a theory developed in the book
\cite{LN} by Ottmar Loos and the author.
\end{abstract}

\maketitle


\bigskip

\section*{Introduction}

The connection between Jordan structures (Jordan algebras, Jordan pairs) and
Lie algebras and groups has a long and successful history, starting with the
work of Chevalley-Schafer \cite{CS} and continued by Jacobson
\cite{Ja1,Ja2,Ja3}, Kantor \cite{Ka1,kan:trans}, Koecher
\cite{koe:imbedI,Ko2,Ko3,Ko69}, Loos \cite{hav,ag,simp}, Springer \cite{Sp},
Springer-Veldkamp \cite{SV} and Tits \cite{Ti0,tits:alt}.

The book \cite{LN} by Loos and the author is a further contribution to the
theme ``Groups and Jordan Structures''. It contains a detailed study of
Steinberg groups associated with certain types of Jordan pairs.
These groups generalize the classical linear and unitary Steinberg groups of a ring
by, roughly speaking, replacing associative
coordinates with Jordan algebras or Jordan pairs. We are able to
prove the basic results on Steinberg groups (central closedness, universal central extension in the stable case) in our setting, thereby recovering all previous results, except those on  groups of type $\rme_8$, $\rmf_4$ and $\rmg_2$, and in addition deal with new types, not considered before. The main novelty however is our approach based on $3$-graded root systems and Jordan pairs. \ms

The present paper is an introduction to the theory developed in \cite{LN}. In
\S\ref{sec:elg} we describe the linear Steinberg group $\St(A)$ of a ring $A$
from the point of view of Jordan pairs. This is motivation for \S\ref{sec:gen}
where we define the Steinberg group of a root graded Jordan pair and state the
main results of \cite{LN} regarding these groups. The final section
\S\ref{sec:open} discusses some open research problems in the area of Steinberg
groups and Jordan pairs.

The paper does not assume any prior knowledge of linear Steinberg groups or
Jordan pairs: all relevant definitions are given in the paper. We demonstrate
their scope by many examples and refer the reader to \cite{lfrs} and \cite{LN} for most proofs. But we include the details of our discussion of the linear Steinberg group and the elementary
linear group of a ring from the point of view of Jordan theory (\ref{ano} --
\ref{news} and \ref{exc} respectively). We also give all details of our
description of the Tits-Kantor-Koecher algebra and the projective elementary
group of a rectangular Jordan pair (\ref{rect-pvex}, \ref{pvn}). \sm

{\em Notation.} Throughout $k$ is a unital commutative associative ring and $A$
is a not necessarily commutative, but unital associative $k$-algebra. Its
identity element and zero element are denoted $1_A$ and $0_A$
respectively. We will often simply write $1$ for $1_A$ if $A$ is clear from the
context, and analogously for $0\in A$. We use $A\ti$ to denote the invertible
elements of $A$. If $k=\ZZ$ we will refer to $A$ as a ring.

For non-empty sets $I$ and $J$ we denote by $\Mat_{IJ}(A)$ the $k$-module of
$I\times J$-matrices over $A$, i.e., maps $x: I \times J \to A$ with only finitely many values different from $0$.  As usual, we write a matrix in the form $x =
(x_{ij})_{(i,j) \in I\times J}$. In case $I=J$ we
abbreviate $\Mat_I(A) = \Mat_{IJ}(A)$. This is an associative $k$-algebra with
respect to ordinary matrix multiplication which is unital if
and only if $I$ is finite.
We put $\Mat_n(A)= \Mat_I(A)$ if $|I|=n< \infty$.
Here and in general $|I|$ denotes the cardinality of the set $I$. The identity
element of $\Mat_n(A)$ is denoted ${\bf 1}_n$, and the group
$\Mat_n(A)\ti$ by $\GL_n(A)$.


The group commutator of elements $g,h$ in a group $G$ is $\dbl g,h\dbr = ghg\me
h\me$. \ms


{\em Acknowledgement.}  The author thanks Ottmar Loos for many helpful comments on an earlier version of this paper.

\section{Elementary linear groups and their Steinberg groups}\label{sec:elg}

In this section we give an introduction to elementary linear groups over a ring
$A$ (\ref{elg}) and their associated Steinberg groups (\ref{stgr}). After a
review of central extensions in \ref{centex} we state the
Kervaire-Milnor-Steinberg Theorem (\ref{kms})
which says, for example, that the stable Steinberg group is the universal central extension of the stable elementary group.
We also exhibit a new set of generators and relations for the Steinberg groups considered in this section (\ref{ano} -- \ref{rw}), which we take as axioms for a new Steinberg group defined in \ref{news}. The main result is Theorem~\ref{mth}: the classical and the new Steinberg groups are isomorphic. \ms

\subsection{Elementary linear groups} \label{elg} Let $n\in \NN$, $n\ge 2$. As usual, $E_{ij}\in \Mat_n(A)$ is the
$n\times n$-matrix with entry $1_A$ at the position $(ij)$ and $0_A$ elsewhere.
For $1\le i\ne j\le n$ and $a\in A$ we put
\[
 \e_{ij}(a) = {\bf 1}_n + a E_{ij}, \quad (a\in A)
\]
The well-known multiplication rules $aE_{ij} \, bE_{kl} = \de_{jk}\, ab E_{il}$
for $a,b\in A$ imply
\begin{CEquation}\label{elg1}
\e_{ij} (a) \, \e_{ij}(b) = \e_{ij}(a + b).
\end{CEquation}
\lv{
$\e_{ij}(a) \e_{ij}(b) = (E_n + a E_{ij})(E_n + b E_{ij}) = E_n + a E_{ij} + b
E_{ij} + (aE_{ij})( bE_{ij}) = E_n + (a + b) E_{ij}$ since $i\ne j$ }
Hence $\e_{ij}(a) \, \e_{ij}(-a) = {\bf 1}_n = \e_{ij}(-a) \, \e_{ij}(a)$,
which shows that $\e_{ij}(a) \in \GL_n(A)$.
The {\em elementary linear group (of rank $n$)} is the subgroup
\[ \EL_n(A) = \Lan \e_{ij}(a) : 1\le i \ne j \le n, a\in A\Ran
\]
of $\Mat_n(A)\ti$ generated by all $\e_{ij}(a)$.  \ms

One easily verifies two further relations of the $\e_{ij}(a)$:
\begin{align}
\label{elg2} \dbl \e_{ij}(a), \, \e_{kl}(b) \dbr &= {\bf 1}_n \qquad \qquad
            (j\ne k, i \ne l), \tag{E2} \\
\label{elg3} \dbl \e_{ij}(a), \, \e_{jl}(b) \dbr &= \e_{il}(ab) \qquad (i,j,l\ne).
\tag{E3}
\end{align}
Taking the inverse of \eqref{elg3} and using $\dbl g,h\dbr \me = \dbl h,g\dbr$
yields the equivalent relation
\begin{equation} \label{elg4} \dbl \e_{ij}(a), \, \e_{ki}(b) \dbr = \e_{kj}(-ba)
\qquad (i,j,k\ne). \tag{E4}
\end{equation}
\lv{
Proof: $\dbl \e_{ij}(a), \, \e_{ki}(b) \dbr = \dbl \e_{ki}(b),
\e_{ij}(a)\dbr\me =
\e_{kj}(ba) \me = \e_{kj}(-ba)$}
\sm

We will also need an infinite variant of $\Mat_n(A)$ and the group
$\EL_n(A)$. Let
\[ \Mat_\NN (A) \]
be the set of all $\NN \times \NN$-matrices $x=(x_{ij})_{i,j\in \NN}$
with entries from $A$. 
Recall that only finitely many $x_{ij} \ne 0$. The usual
addition and multiplication of matrices are well-defined operations on
$\Mat_\NN(A)$ satisfying all axioms of a ring, except the existence of an
identity element. To remedy this, let ${\bf 1}_\NN = \diag(1_A, 1_A , \ldots )$
be the diagonal matrix of size $\NN \times \NN$ with every diagonal entry being
$1_A$. Then
\[ \Mat_\NN(A)_{\rm ex} := k {\bf 1}_\NN + \Mat_\NN(A)\]
is a ring with the usual addition and multiplication of matrices. Its identity
element is ${\bf 1}_\NN$ and its zero element is the zero matrix, see for
example \cite[1.2B]{hahn} where this ring is denoted $\Mat_\infty(A)$
(its elements are the $\NN \times \NN$-matrices with entries from $A$ which have
only finitely many non-zero entries off the diagonal and whose diagonal
elements become eventually constant).

We associate with $x\in \Mat_n(A)$ the matrix $\io_n(x) \in
\Mat_\NN(A)_{\rm ex}$ by putting $x$ in the upper left corner and
filling the diagonal outside $x$ with $1_A$:
\[
\io_n(x) = \begin{pmatrix}
  x & 0 \\ 0 & \diag(1_A, \ldots) \end{pmatrix}
\]
Then $\io_n$ maps invertible matrices in $\Mat_n(A)$ to invertible matrices of
$\Mat_\NN(A)_{\rm ex}$, in particular $\io_n\big(\e_{ij}(a)\big)\in
\Mat_\NN(A)_{\rm ex}\ti$. Since $\io_n\big( \e_{ij}(a)\big) = \io_p\big(
\e_{ij}(a)\big)$ for $p \ge n$, we can take the maps $\io_n$ as identification
and view all $\e_{ij}(a)$, $i, j\in \NN$ with $i\ne j$, as elements of
$\Mat_\NN(A)\ti_{\rm ex}$. The {\em (stable) elementary linear group \/} is the
subgroup $\EL(A)$ of $\Mat_\NN(A)_{\rm ex}\ti$ generated by all the
$\e_{ij}(a)$:
\[
 \EL(A) = \Lan \e_{ij}(a) : i,j\in \NN, i \ne j \Ran.
\]
It is immediate that the relations \eqref{elg1} -- \eqref{elg4} also hold in
$\EL(A)$. The group $\EL(A)$ is canonically isomorphic to the limit of the
inductive system $(\EL_n(A), \io_{pn})$ where $\io_{pn} \co \EL_n(A) \to
\EL_p(A)$ for $p \ge n$ is defined by taking the left upper $(p \times
p)$-corner of $\io_n(x)$.

\subsection{Why is $\EL_n(A)$ important?} \label{eqrem}
One reason is that $\EL_n(F) = \SL_n(F)$ in case of $A=F$ is a field -- in other words, every matrix of determinant $1$ can be reduced to the identity matrix
by elementary row and column reductions. The equality $\EL_n(A) =
\SL_n(A)$ even holds for a noncommutative local ring, for example a division
ring, 
if one uses the Dieudonn\'e determinant (\cite[2.2.2]{hahn} or
\cite[2.2.2--2.2.6]{Ro}). If $A$ is commutative then obviously $\EL_n(A)
\subset \SL_n(A)$. Equality holds for example if $A$ is a Euclidean ring
\cite[1.2.11]{hahn}.

While all of this is interesting, the real interest in $\EL_n(A)$ and $\EL(A)$
stems from their connection to Steinberg groups of $A$ and to the K-group $K_2(A)$, defined in \eqref{stgrk2}.

\subsection{The Steinberg groups $\St_n(A)$ and $\St(A)$}\label{stgr} We assume $n\in \NN$, $n \ge 3$ (the case $n=2$ is uninteresting since then the definitions below yield free products of $A$. The group $\St_2(A)$ is defined differently, see e.g.\ \cite{milnor}; it will not play a role in this paper).
\sm

The {\em Steinberg group $\St_n(A)$\/} is the group presented by
\begin{itemize}
  \item[$\bullet$] generators $\x_{ij}(a)$, $1 \le i\ne j\le n$ and $a\in A$,
      and
  \item relations \eqref{elg1} -- \eqref{elg3} of \S\ref{elg}:
\begin{align*}
       \x_{ij}(a) \, \x_{ij}(b) &= \x_{ij}(a +
       b) \quad \hbox{ for all $1\le i\ne j \le n$ and $a,b\in A$}, \\
        \dbl \x_{ij}(a), \x_{kl}(b) \dbr &=1 \quad
            \hbox{if $j \ne k$ and $l \ne i$},\\
     \dbl \x_{ij}(a), \x_{jl}(b) \dbr &= \x_{il}(ab) \quad
            \hbox{if $i, j, l \ne$}.
\end{align*} \end{itemize}

The {\em (stable) Steinberg group $\St(A)$\/} is the group
presented by
\begin{itemize}
  \item generators $\x_{ij}(a)$, $i,j\in \NN$ distinct, $a\in A$, and

  \item relations \eqref{elg1} -- \eqref{elg3} for $i,j\in \NN$.
\end{itemize} \sm

Since the defining relations \eqref{elg1} -- \eqref{elg3} hold in $\EL_n(A)$
and $\EL(A)$ we get surjective group homomorphisms
\begin{equation} \label{stgwp}
  \wp_n \co \St_n(A) \to \EL_n(A)\qquad \hbox{and} \qquad \wp \co \St(A) \to \EL(A)
\end{equation}
determined by  $\x_{ij}(a) \mapsto \e_{ij}(a)$. The second K-group of $A$ is
then defined as
\begin{equation} K_2(A) := \Ker(\wp). \label{stgrk2} \end{equation}
This is an important but also mysterious group, even for fields. The reader
can find more about this group in the classic \cite{milnor}, and in \cite[Ch.~1]{hahn}, \cite[Parts IV and V]{magurn}, \cite[Ch.~4]{Ro}, or   \cite[III]{weibel} (the list is incomplete).
\sm

To put all of
this in a bigger picture, we make a short detour on central extensions of groups.
\lv{
Outlook on $K_2(F)$, $F$ field:
\begin{enumerate}
  \item Matsumoto's Theorem \cite[4.3.15]{Ro}

  \item Relation to $_n {\rm Br}(F)$, \cite[4.4.18]{Ro} or Magurn, 16D

  \item $K_2(\QQ)$ \cite[4.4.9]{Ro}

  \item $|K_2(F)| = 1$ for a finite field \cite[4.3.13]{Ro}.

   \item $K_2(F)$, $F$ algebraically closed, is a divisible group [Lam --
       Intro to quadratic forms, Ex. 6.15]

   \item $K_2(F)$ is uncountable for an uncountable field (same reference
       as (5)
\end{enumerate}
One also knows $K_2(\ZZ)| = 2$ \cite[Remark p. 93, proven on p. 97]{st2}}

\subsection{Central extensions}\label{centex} Let $G$ be a group.
An {\em extension of $G$\/} is a surjective group homomorphism $p \co E \to
G$. An extension is called {\em central\/} if $\Ker(p)$ is contained in the centre of the group $G$. A central extension $q\co X \to G$ is a {\em universal central extension\/} if for all central extensions $p \co E \to G$ there exists a unique homomorphism $f \co X \to E$ such that $q = p \circ f$:
 \[  \xymatrix{ X  \ar@{.>}[rr]^{\exists! \, f} \ar[dr]_q && E \ar[dl]^p \\ &G}
\]
A group $X$ is called {\em centrally closed\/} if $\Id_X \co X \to X$ is a
universal  central extension. Thus, $X$ is centrally closed if and only if
every central extension $p \co E \to X$ splits uniquely in the sense that
there exists a unique group homomorphism $f \co X \to E$ satisfying $p \circ
f = \Id_X$. The concepts defined above are related by the following facts,
proved for example in \cite[1.4C]{hahn}, \cite[\S5]{milnor}, \cite[4.1]{Ro}
or \cite[\S7]{st2}. \ms

\begin{enumerate}[(a)]
  \item\label{centex-a}  A group $G$ has a universal central extension if and
      only if it is {\em perfect\/}, i.e.,
      generated by all commutator $\dbl g, h\dbr$ of $g,h\in G$. In
      particular, a centrally closed group is perfect.\sm

\item For two universal central extensions of a group $G$, say $q \co X \to
    G$ and $q' \co X' \to G$, there exists an isomorphism $f \co X \to X'$ of
    groups, uniquely determined by the condition $q = q' \circ f$. \sm

\item \label{centex-b} Let $q \co X \to G$ be a universal central
    extension, whence $G$ is perfect by \eqref{centex-a}. Then $X$ is
    centrally closed and thus also perfect, again  by \eqref{centex-a}.
    Obviously, $G$ is a central quotient of $X$. The following fact
    \eqref{centex-c} says that every universal central extension of $G$ is
    obtained as a central quotient of a centrally closed group.
     \sm

\item \label{centex-c} A surjective group homomorphism $q \co X \to G$ is a
    universal central extension if and only if (i) $X$ is centrally closed
    and (ii) $\Ker(q)$ is central.  \sm

\item \label{centex-d} Let $f\co X \to G$ and $g\co G \to \bar G$ be
    central extensions. Then $f$ is a universal central extension if and
    only if $g \circ f $ is a universal central extension.
\end{enumerate}
\sm

 \noindent To describe a universal central extension of a group $G$ we have,
by \eqref{centex-c} and \eqref{centex-d}, two approaches:

\begin{enumerate}[(I)]

\item  \label{centexI}
Find successive central extensions $G_1 \to G_0=G, \ldots , G_n \to G_{n-1}, \ldots$ until one of them, say $G_n \to G_{n-1}$, becomes universal, and then take the composition $G_n \to G$ of these central extensions, or

\item \label{centexII} find an extension $q \co X \to G$ with $X$ centrally
    closed and then find conditions for $\Ker(q)$ to be central.
    \end{enumerate}

\noindent
Although \eqref{centexI} seems to be the more natural approach, 
we will actually follow \eqref{centexII}. But first back to Steinberg groups.
\sm

In \cite{st1} Steinberg gave a very elegant description of the universal central extension of a perfect Chevalley group over a field. ``Most''
Chevalley groups are perfect by \cite[Lemma~32]{st2}. In particular,
for $n\ge 2$ and $F$ a field, the group $\SL_n(F)= \EL_n(F)$ (equality by
\ref{eqrem}) is a Chevalley group, and it is perfect if $n \ge 4$ or if $n=3$
and $|F|\ge 3$ or if $n=2$ and $|F|\ge 4$. A special case of Steinberg's
result in \cite{st1} is the following theorem.

\begin{theorem}[{\cite{st1}, \cite[Thm.~10]{st2}, \cite[Thm.~1.1]{st3}}]\label{st-thm}
Let $n\in \NN$, $n\ge 2$ and let $F$ be a field satisfying $|F|> 4$ if $n=3$
and $|F| \not\in \{2,3,4,9\}$ if $n=2$. Then the map $\wp_n \co \St_n(F) \to
\EL_n(F)$ of \eqref{stgwp} is a universal central extension.
\end{theorem}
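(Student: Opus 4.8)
\emph{The plan} is to verify the two conditions of the criterion \eqref{centex-c}, which will then immediately give that $\wp_n$ is a universal central extension: (i) $K_n:=\Ker(\wp_n)$ is central in $\St_n(F)$, and (ii) $\St_n(F)$ is centrally closed. (Here $\EL_n(F)=\SL_n(F)$ by \ref{eqrem}. For $n=2$ one argues in the same way with the modified presentation defining $\St_2(F)$; I treat only $n\ge 3$ below.) As a preliminary I would note that $\St_n(F)$ is perfect: since $n\ge 3$ there are pairwise distinct indices $i,j,l$, and then \eqref{elg3} gives $\x_{il}(a)=\dbl\x_{ij}(a),\x_{jl}(1_F)\dbr$, exhibiting every generator as a commutator. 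Perfectness also makes the splitting required in (ii) unique whenever it exists, since two splittings differ by a homomorphism from $\St_n(F)$ into the abelian group $\Ker p$, which must be trivial.

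\emph{Step 1: $K_n$ is central.} Following Steinberg, I would introduce in $\St_n(F)$ the monomial and diagonal elements
\[
w_{ij}(t)=\x_{ij}(t)\,\x_{ji}(-t\me)\,\x_{ij}(t),\qquad h_{ij}(t)=w_{ij}(t)\,w_{ij}(1_F)\me\qquad(t\in F\ti),
\]
and derive from \eqref{elg1}--\eqref{elg3} the standard conjugation formulas: $w_{ij}(t)$ conjugates $\x_{kl}(a)$ to $\x_{\sigma(k)\sigma(l)}(c\,a)$, where $\sigma$ is the transposition $(i\,j)$ and $c=\pm t^{\pm1}$, whence $h_{ij}(t)$ conjugates $\x_{kl}(a)$ to $\x_{kl}(t^{\ell}a)$ for a suitable integer $\ell$. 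A Bruhat-type normal form for the elements of $\St_n(F)$, read off from these relations, then shows that $K_n$ lies in the diagonal subgroup $\wdh H=\lan h_{ij}(t):i\ne j,\ t\in F\ti\ran$. Finally, the conjugation action of $\wdh H$ on the generators factors through the scaling action of the diagonal torus of $\SL_n(F)$; hence any element of $\wdh H$ annihilated by $\wp_n$ acts trivially on every generator, so it commutes with all of $\St_n(F)$, and therefore $K_n\subseteq\rmZ(\St_n(F))$.

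\emph{Step 2: $\St_n(F)$ is centrally closed.} Let $p\co E\to\St_n(F)$ be a central extension and choose set-theoretic lifts $\wdh\x_{ij}(a)\in E$ of the generators. Since $p$ is central, any two lifts of a given element differ by a central element, so the defect elements $\wdh\x_{ij}(a)\,\wdh\x_{ij}(b)\,\wdh\x_{ij}(a+b)\me$ and $\dbl\wdh\x_{ij}(a),\wdh\x_{kl}(b)\dbr$ (with $j\ne k,\ i\ne l$) lie in $\Ker p$ and are independent of the chosen lifts. I would first redefine the lifts so that \eqref{elg3} holds identically --- replacing $\wdh\x_{il}(ab)$ by a commutator $\dbl\wdh\x_{ij}(a),\wdh\x_{jl}(b)\dbr$ with an auxiliary index $j$ --- and then, manipulating commutators of the corrected lifts and interpolating over the elements of $F$, force the defects for \eqref{elg1} and \eqref{elg2} to vanish as well. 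The assignment $\x_{ij}(a)\mapsto\wdh\x_{ij}(a)$ then respects all defining relations of $\St_n(F)$ and extends to a homomorphism $s\co\St_n(F)\to E$ with $p\circ s=\Id_{\St_n(F)}$, unique by the remark above.

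By \eqref{centex-c}, (i) and (ii) together show that $\wp_n$ is a universal central extension. \emph{The hard part is Step 2}, and it is exactly there that the restrictions on $|F|$ are indispensable: once the lifts are normalized, a bounded number of degrees of freedom remain, and these can be eliminated only when $F$ has enough elements; for the excluded fields the conclusion genuinely fails --- for instance $\St_3(\FF_4)\to\SL_3(\FF_4)$ is not universal (the Schur multiplier of $\PSL_3(\FF_4)$ being exceptionally large), and likewise for $\St_2(F)$ with $F\in\{\FF_2,\FF_3,\FF_4,\FF_9\}$. By contrast, Step 1 and perfectness are formal consequences of the commutator identities \eqref{elg1}--\eqref{elg4}. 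An alternative route would prove the corresponding statement first for the stable groups $\St(F)\to\EL(F)$, which is valid over every field, and then descend by $K_2$-stability for $\SL_n$ over a field --- but that stabilization input is of comparable depth.
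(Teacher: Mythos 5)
The paper offers no proof of this theorem at all --- it is quoted from Steinberg (\cite{st1}, \cite[Thm.~10]{st2}, \cite[Thm.~1.1]{st3}) --- so there is no in-paper argument to compare against; what you have written is precisely the two-step strategy of the cited source. Step 1 (centrality of $\Ker\wp_n$ via the elements $w_{ij}(t)$, $h_{ij}(t)$, a lifted Bruhat-type decomposition placing the kernel inside $\wdh H$, and the observation that the conjugation action of $\wdh H$ on the root subgroups factors through the diagonal torus of $\SL_n(F)$) is the standard argument and is sound; note it genuinely uses that $F$ is a field, which is why the analogous statement for rings needs the stable group as in Theorem~\ref{kms}.

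Step 2 is where the entire content of the theorem and all the hypotheses on $|F|$ live, and there your text is a plan rather than a proof: ``manipulating commutators \dots and interpolating over the elements of $F$'' does not identify the mechanism. Concretely, after normalizing the lifts by setting $\wdh\x_{il}(ab)=\dbl\wdh\x_{ij}(a),\wdh\x_{jl}(b)\dbr$, the additivity defect $f_{il}(a,b)=\wdh\x_{il}(a)\,\wdh\x_{il}(b)\,\wdh\x_{il}(a+b)\me$ is a central-valued symbol; for $n\ge 4$ it vanishes for purely formal reasons (one can route the two commutators through disjoint auxiliary indices and use \eqref{elg2}), whereas for $n=3$ one must conjugate by lifted torus elements and combine the resulting identities (of the shape $f(ta,t\me b)=f(a,b)$) with biadditivity to force $f\equiv 0$, and this works only when $|F|>4$. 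That is exactly where the hypothesis is consumed, and your sketch does not reach it. Your justification of the exclusions via the exceptional Schur multiplier of $\PSL_3(\FF_4)$ is correct, as is the alternative stable-plus-$K_2$-stability route you mention. In short: right approach, identical to the proof the paper cites, complete for Step 1, but Step 2 as written is an outline of Steinberg's argument rather than the argument itself.
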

\ms

We have defined the maps $\wp_n\co \St_n(A) \to \EL_n(A)$ and $\wp\co \St(A)
\to \EL(A)$ for any ring $A$. It is therefore natural to ask if
Theorem~\ref{st-thm} even holds for rings. An answer is given in the
following Kervaire-Milnor-Steinberg Theorem.

\begin{KMS}[{\cite{kerv,milnor,st2}}]\label{kms} For an arbitrary ring $A$, \sm

\begin{inparaenum}[\rm (a)]
 \item\label{kms-a} the group $\St_n(A)$, $n \ge 5$, is centrally closed. \sm

\item \label{kms-b} The map $\wp \co \St(A) \to \EL(A)$ is a universal
    central extension.
\end{inparaenum}
\end{KMS}

An indication of the proof of \eqref{kms-a} can be found in see \cite[1.4.12]{hahn} or \cite[Cor.~1]{st2}. The attribution of part \eqref{kms-b} of this theorem is
somewhat complicated. Kervaire cites a preliminary version of \cite{milnor},
Milnor attributes it to Steinberg and Kervaire (\cite[p.~43]{milnor}), and
Steinberg says that \eqref{kms-b}, proved
in \cite[Thm.~14]{st2}, is ``based in part on a letter from J.~Milnor''. \sm

In view of \eqref{kms-a}, the map $\wp_n \co \St_n(A) \to \EL_n(A)$, $n\ge
5$, is a universal central extension if and only if it is a central
extension. It is known that this is not always the case, see
\cite[4.2.20]{hahn}. However, by \ref{centex}\eqref{centex-b}, both
$\St_n(A)$ and $\St(A)$ are centrally closed. It is this result that we will
be concentrating on, following the strategy \ref{centex}\eqref{centexII}.

\subsection{Another look at $\St_n(A)$ and $\St(A)$: using root systems.}\label{ano}
To treat $\St_n(A)$, $n\in \NN$, $n\ge 3$, and $\St(A)$ at the same time we use the subset $N\subset \NN$, which is the finite integer interval $N=[1,n]$ or $N=\NN$.
We can then put
\[
 \St_N(A) = \begin{cases}
   \St_n(A) & \hbox{if } N = [1, n], \\ \St(A) & \hbox{if } N = \NN.
 \end{cases}
\]
By definition in \ref{stgr}, $\St_N(A)$ is generated by $\x_{ij}(a)$, $(i,j)
\in N \times N$,
$i \ne j$, and $a\in A$. We replace this indexing set by the
root system $\rma_N$ (notation of \ref{class}),
which we realize in the Euclidean space $X=\bigoplus_{i\in N} \RR \eps_i$ with basis $(\eps_i)_{i\in N}$ and inner
product $\inpr$ defined by $(\eps_i | \eps_j) = \del_{ij}$:
\[
 R = \rma_N = \{\eps_i - \eps_j : i,j\in N\}, \qquad R\ti =  R \setminus \{0\}.
\]
Thus $R\ti=\rmaf_{n-1}$ for $N=[1,n]$ in the traditional notation, while for $N=\NN$ we get an infinite locally finite root system -- a concept that we will
review later in \ref{lfrs}. For the moment, it suffices to use the concretely
given $R$ above. \sm

For $\alp$, $\beta \in R$ one easily checks that $(\alp | \beta) \in \{0, \pm
1, \pm 2\}$ with $(\alp | \beta ) = \pm 2 \iff  \alp = \pm \beta$. To conveniently
describe the remaining cases we use the symbols
\begin{equation} \label{anopdef}
 \alp \perp \beta \iff (\alp | \beta) = 0 \qquad \hbox{and} \qquad \alp
\edge \beta \iff (\alp | \beta) = 1.
\end{equation}
A straightforward analysis of the indices shows for $\alp = \eps_i - \eps_j$
and $\beta = \eps_k - \eps_l \in R$ that
\begin{align}
\alp \perp \beta \; \hbox{ or } \; \alp \edge \beta \quad & \iff \quad j \ne k
    \; \hbox{ and }\;  l \ne i, \nonumber \\
\alp \edge (-\beta) \quad & \iff \quad j=k, i,j,l \ne \; \hbox{ or } \; i=l, i,j,k \ne.
\label{ano0} \end{align}
\lv{
We will prove the two equivalences simultaneously. For $\alp$ and $\beta$ as
given we have \[ (\alp | \beta ) = \del_{ik} - \del_{il} - \del_{jk} +
\del_{jl} \] with $(\alp|\beta) = \pm 2 \iff \alp = \pm \beta \iff i=k, j=l
\hbox{ or } i=l, j=k$. It therefore suffices to show
\begin{align*}
  (\alp | \beta) \in \{0,1\} \quad &\implies \quad j \ne k \hbox{ and } l \ne i, \\
  (\alp | \beta) = -1 \quad &\implies \quad i,j=k,l \ne \hbox{ or } i=l,j,k \ne.
\end{align*}

Case $(\alp | \beta) = 1$: Then $\del_{ik} = 1$ or $\del_{jk} = 1$ but not
both, which means $i=k$, $j\ne l$ or $j=l$, $ i \ne k$. If $i=k$, $j \ne l$
then $j\ne k$ because $i \ne j$ and $i \ne l$ because $k \ne l$. If $j=l$,
$i\ne k$ then $j \ne k$ because $j=l \ne k$ and $i \ne l$ because $i \ne j =
l$.

Case $(\alp | \beta) = 0$: Then $\del_{ik} = 0 = \del_{il} = \del_{jk} =
\del_{jl}$ so in particular $\del_{jk} = 0 = \del_{il}$.

Case $(\alp | \beta) = -1$: Then $\del_{jk} = 1$ or $\del_{il} = 1$ but not
both, whence $j=k$, $i \ne l$ (then $i,j,l \ne$ because $i \ne j$) or $i=l$,
$j \ne k$ (then $i,j,k\ne$).}

\noindent Hence, putting
\[ \x_\al(a) = \x_{ij}(a) \qquad \hbox{for $\alp = \eps_i - \eps_j\in R\ti$},\]
the relations \eqref{elg1} -- \eqref{elg4} can be rewritten in terms of roots
as follows.  Let $\alp$, $\beta\in R\ti$ and denote the relations corresponding to (Ei) by (ERi).  Then the previous relations read
\begin{align}
\label{ano1} \dbl \x_\al(a), \, \x_\al(b) \dbr &= 1,
            \tag{ER1}\\
\label{ano2} \dbl \x_\al(a), \, \x_\beta(b) \dbr &= 1,
           & &\hbox{if $\alp \perp \beta$  or $ \alp \edge \beta$}, \tag{ER2} \\
\label{ano3} \dbl \x_\alp(a), \, \x_\beta (b) \dbr &= \x_{\alp + \beta}(ab) \qquad
  & &\hbox{if $\alp = \eps_i - \eps_j$, $\beta = \eps_j - \eps_l$, $i,j,l\ne$},
\tag{ER3} \\
\label{ano4} \dbl \x_\alp(a), \, \x_\beta (b) \dbr &= \x_{\alp + \beta}(-ba) \qquad
  & &\hbox{if $\alp = \eps_i - \eps_j$, $\beta = \eps_k - \eps_i$, $i,j,k\ne$}.
\tag{ER4}\end{align}
In particular, the two cases for $\alp \edge (-\beta)$ in \eqref{ano0} correspond
precisely to the relations \eqref{ano3} and \eqref{ano4}.

\subsection{Another look at $\St_N(A)$: fewer generators.} \label{less}
We continue with $N$ and $R$ as in \ref{ano}. In addition we choose a
nontrivial partition
\[ N = I \dotcup J, \qquad \emptyset \ne I \ne N, \]
which we fix in the following.
It induces a non-trivial partition
\begin{equation} \label{less3gr}
    R=R_1 \dotcup R_0 \dotcup R_{-1},
\end{equation}
whose parts are
\begin{align*} R_1 &= \{ \eps_i - \eps_j : i \in I,
\, j \in J \}, \\ R_{-1} &= \{\eps_j - \eps_i : i \in I, j\in J \} = - R_1, \\
R_0 &= \{ \eps_k - \eps_l : (k,l) \in I \times I \hbox{ or } (k,l) \in J
\times J \} = \rma_I \times \rma_J.
\end{align*}
The partition $R=R_1 \dotcup R_0 \dotcup R_{-1}$ will later be seen to be an
example of a $3$-grading of $R$, but we do not need this now. Observe that
every $\mu \in R_0$ can be written (not uniquely) as $\mu = \alp - \beta$
with $\alp$ and $\beta \in R_1$ satisfying $\alp \edge \beta$. Indeed, \sm

\begin{enumerate}[(i)]
  \item \label{less-i} if $\mu = \eps_k - \eps_l$ with $k,l\in I$ then $\mu
      = (\eps_k - \eps_j) - (\eps_l - \eps_j)$ for any $j\in J$, hence
\begin{equation}
  \label{less1} \x_\mu(a) = \x_{kl}(a) = \dd {\x_{kj}(a)} {\x_{jl}(1)}= \dd
        {\x_\al(a)} {\x_{-\beta}(1)}
\end{equation}
by \eqref{ano3} for $\alp= \eps_k - \eps_j$ and $\beta= \eps_l - \eps_j \in
R_1$, and \sm

 \item\label{less-ii}  if $\mu = \eps_k - \eps_l$ with $k,l\in J$ then $\mu
     = (\eps_i - \eps_l) - (\eps_i - \eps_k)$ for any $i\in J$, hence
\begin{equation}
  \label{less2} \x_\mu(a) = \x_{kl}(a) = \dd {\x_{il}(-a)} {\x_{ki}(1)}
       = \dd {\x_\al(-a)} {\x_{-\beta}(1)}
\end{equation}
by \eqref{ano4} for $\alp = \eps_i - \eps_l$ and $\beta = \eps_i - \eps_k
\in R_1$.
\end{enumerate}
\ms

The equations \eqref{less1} and \eqref{less2} show that $\St_N(A)$ is already
generated by \begin{equation}  \{ \x_\al(a) : \alp \in R_1 \cup R_{-1}, \, a
\in A \}. \label{less3} \end{equation}

\subsection{Another look at $\St_N(A)$: fewer relations.} \label{rw}
Our next goal is to rewrite some of the relations \eqref{ano1} --
\eqref{ano4} in terms of the smaller generating set \eqref{less3}. Each of
these relations depend on two roots $\xi$, $\tau \in R$. Because of ${\dd g
h}{}\me = \dd hg$ we only need to consider the relations involving $(\xi,
\tau)$ lying in one of the following subsets of $R \times R$:
\[ R_1 \times R_1, \quad  R_{-1} \times R_{-1}, \quad  R_1 \times R_{-1}, \quad
R_0 \times R_1, \quad  R_0 \times R_{-1}, \quad  R_0 \times R_0.\]%
\sm

\begin{inparaenum}[(a)]
 \item \label{rw-a} Case $(\xi,\tau) = (\alp,\beta) \in R_1 \times R_1$:
Given $\alp, \beta \in R_1$,  exactly one of the relations $\alp = \beta$, $\alp \edge
     \beta$, $\alp \perp \beta$ holds. Hence only \eqref{ano1} and
     \eqref{ano2} apply in this case and yield
     \begin{equation}  \label{rw1}
        \dd {\x_\al(a)} {\x_\beta(b)} = 1 \qquad \hbox{for $\alp$, $\beta \in R_1$ and $a,b\in A$}.
     \end{equation}

\noindent It will now be more convenient to change notation (again) and put
for $\alp = \eps_i - \eps_j\in R_1$ and $u_\al = a E_\al^+$
\begin{align}
  E_\al^+ &= E_{ij} , &
\xpp(u_\al) &= \x_\al(a) = \x_{ij}(a)
 \label{rw-a1} \\
  V_\al^+ &= A E_\al^+, &
  V^+ &= \textstyle \bigoplus_{\alp \in R_1} V_\al^+
     = \bigoplus_{(ij) \in I \times J} \, A E_{ij}. \label{rw-a2}
\end{align}
Because of \eqref{rw1} the map
\begin{equation}\label{xpp-def}  \xpp \co V^+ \longto
    \St_N(A), \quad u = \textstyle \sum_{\alp \in R_1} \, u_\al \; \mapsto \;
    \prod_{\alp \in R_1} \xpp(u_\al)
\end{equation}
is well-defined (independent of the chosen order for $\prod_{\alp \in R_1}$)
and satisfies
\begin{equation} \label{rw2} \xpp(u + u') = \xpp(u) \, \xpp(u')
\qquad \hbox{for $u,u' \in V^+$}.
\end{equation}
It is clear that conversely \eqref{rw2} implies \eqref{rw1}. \ms

\item\label{rw-b}  Case $(\xi,\tau) = (-\alp,-\beta) \in R_{-1} \times
    R_{-1}$: This case is analogous to Case~\eqref{rw-a}. Given $\alp = \eps_i
    - \eps_j \in R_1$ and $v_\alp = a E_\al^-$ we define
\begin{align}
\nonumber  E_\al^- &= E_{ji} , &
\xm (v_\al) &= \x_{-\al}(-a)= \x_{ji}(-a), \\
\label{rw-b2}   V_\al^- &= A E_\al^-, &
  V^- &= \textstyle \bigoplus_{\alp \in R_1} V_\al^-
     = \bigoplus_{(ji) \in J \times I} \, A E_{ji}.
\end{align}
(The minus sign in the definition of $\xm(v_\al)$ is not significant, but has
been included so that the relations below are precisely those used later on. It avoids a minus sign in the formula \eqref{exc2}.)
As in Case~\eqref{rw-a} the relations \eqref{ano1} -- \eqref{ano4} for
$(-\alp,-\beta) \in R_{-1} \times R_{-1}$ yield $\dd {\xmp(v_\al)}
{\xmp(v'\be)} = 1$ and thus give rise to a well-defined map
\begin{equation}
\label{xmp-def}  \xmp \co V^- \longto \St_N(A), \quad
    v = \textstyle \sum_{\alp \in R_1} \, a_\al E_\al^-\; \mapsto \;
       \prod_{\alp \in R_1} \xmp( - a_\al E_\al^-)
       \end{equation} satisfying
\begin{equation} \label{rw3} \xmp(v + v') = \xmp(v) \, \xmp(v')
\qquad \hbox{for $v,v' \in V^-$}.
\end{equation}

At this point we obtain a new generating set of $\St_N(A)$,
\begin{equation} \label{rw0}
 \St_N(A) = \Lan  \xpp(V^+) \cup \xmp(V^-) \Ran,
\end{equation}
\ms

\item\label{rw-c} Case $(\xi,\tau) = (\alp,-\beta) \in R_1 \times R_{-1}$:
    From this case we will only explicitly keep the relation \eqref{ano2},
    which in our new notation says
\begin{equation} \label{rw4}
   \dd {\xpp(u)} {\xmp(v)} = 1 \qquad \hbox{for $(u,v) \in V^+_\al \times V^-_\be$ with
   $\alp \perp \beta$}.
\end{equation}
In the following Case~\eqref{rw-d} we use the relations \eqref{ano3} and \eqref{ano4}
for $(\al,-\beta) \in R_1\times R_{-1}$ in double commutators. \ms

\item \label{rw-d} Case $(\xi, \tau) = (\mu, \ga) \in R_0 \times R_1$: To
    deal with this case, we view the elements of $V^+$ as $I \times
    J$-matrices with only finitely many non-zero entries, as in
    \eqref{rw-a1}. Similarly, elements in $V^-$ are $J\times I$-matrices with
    finitely many non-zero entries. Matrix multiplication of matrices in $V^+
    \times V^- \times V^+$ is then well-defined and yields the {\em Jordan
    triple product\/} $\{ \cdots \}$, i.e., the map
\begin{equation*} \label{rw-d0}
    \{ \cdots \} \co V^+ \times V^- \times V^+ \longto V^+, \quad (x,y,x)
       \mapsto \{ x\, y\, z\} := xyz + zyx.
\end{equation*}
We claim that \eqref{ano2} -- \eqref{ano4} imply
\begin{equation}
  \label{rw-d1}
  \begin{split} & \dd  {\dd{\xpp(u_\al)} {\xmp(v_\be)}} {\xpp(z_\ga)}
   =           \xpp(- \{ u_\al \, v_\be \, z_\ga\}) \\
  & \qquad \hbox{for $\alp, \beta, \ga \in R_1$ with $\alp \edge \beta$ and
  all $u_\al \in V_\al^+$, $v_\be \in V_\be^-$, $z_\ga \in V_\ga^+$}
\end{split} \end{equation}
We prove this by evaluating all possibilities for $\mu = \alp - \beta$ with
$\al, \beta \in R_1$ satisfying $\al \edge \beta$ and $\ga = \eps_r - \eps_s \in R_1$.
By \ref{less}\eqref{less-i} and \ref{less}\eqref{less-ii} there are two cases for such
a representation of $\mu$, discussed below as (I) and (II). \sm

(I) $\alp = \eps_i - \eps_j$, $\beta = \eps_i - \eps_l$ for $i\in I$ and
$j,l\in J$ distinct. Thus $\mu = \alp - \be = \eps_l - \eps_j$. We let $
u_\al = aE_{ij}$, $v_\be = b E_{li}$, $z_\ga = c E_{rs}$. Then, by
\eqref{ano4}  --  \eqref{elg4},
\begin{align*}
 &\dd {\dd{\xpp(u_\al)} {\xmp(v_\be)}} {\xpp(z_\ga)}
   = \dd {\dd{\x_{ij}(a)} {\x_{li}(-b)}} {\x_{rs}(c)}
    = \dd {\x_{lj}(ba)} {\x_{rs}(c)} =: A, \\
&  \{ u_\al \, v_\be \, z_\ga\} = \{aE_{ij} \, bE_{li}\, cE_{rs}\} =
    \de_{sl} \, cba E_{rj} =: B.
\end{align*}
If $l=s$ then, again by \eqref{elg4}, $A=\x_{rj}(-cba)$, while $B= cba\,
E_{rj}$. Otherwise $l \ne s$,  whence $A=1$ by \eqref{elg2} and clearly
$B=0$. This finishes the proof of \eqref{rw-d1} in case (I). \sm

(II) $\alp = \eps_i - \eps_j$, $\beta = \eps_k - \eps_j$ for distinct $i,k\in
I$ and $j\in J$.
This can be shown in the same way as (I).
 \sm

To obtain a slightly simpler version of \eqref{rw-d1} we apply the commutator
formula \[ \dd g {h_1\, h_2} = \dd g {h_1} \cdot \dd g {h_2} \cdot \dd { \dd
{h_2} g } {h_1}\] with $g= \dd {\xpp(u_\al)} {\xmp(v_\be)}$, $h_1 =
\xpp(z_{\ga})$ and $h_2 = \xpp(z_\de)$ for arbitrary $\de \in R_1$. We obtain
$\dd g {h_1\, h_2} = \dd g {h_1} \cdot \dd g {h_2}$, which allows us to
rewrite \eqref{rw-d1} in the form
\begin{equation}
  \label{rw-d2}
  \begin{split} & \dd  {\dd{\xpp(u_\al)} {\xmp(v_\be)}} {\xpp(z)}
   =           \xpp(- \{ u_\al \, v_\be \, z\}) \\
  & \qquad \hbox{for $\alp, \beta\in R_1$ with $\alp \edge \beta$ and
  arbitrary $u_\al \in V_\al^+$, $v_\be \in V_\be^-$, $z \in V^+$.}
\end{split} \end{equation}
\sm

\item\label{rw-e} Case $(\xi,\tau) = (\mu, -\gam) \in R_0 \times R_{-1}$. We
    proceed as in Case~\eqref{rw-d} and define the Jordan triple product
\[
    \{ \cdots \} \co V^- \times V^+ \times V^- \longto V^-, \quad (x,y,x) \mapsto
       \{ x\, y\, z\} := xyz + zyx,
\]
using matrix multiplication in the definition of $\{ \cdots \}$. As in
Case~\eqref{rw-d} one then proves the relation
\begin{equation}
  \label{rw-e2}
  \begin{split} & \dd  {\dd{\xmp(v_\al)} {\xpp(u_\be)}} {\xmp(w)}
   =           \xmp(- \{ v_\al \, u_\be \, w\}) \\
  & \qquad \hbox{for $\alp, \beta\in R_1$ with $\alp \edge \beta$ and
  arbitrary $v_\al \in V_\al^-$, $u_\be \in V_\be^+$, $w \in V^-$.}
\end{split} \end{equation}
\sm

\item Case $(\xi, \tau) \in R_0 \times R_0$: As we will see below, the
    relations involving these $(\xi, \tau)$ are not needed for presenting
    $\St_N(A)$.
\end{inparaenum}
\lv{
We prove \eqref{rw-e} for $w=w_\ga \in V^-_\ga$. We consider Case (I): $\alp = \eps_i
- \eps_j$, $\beta = \eps_i - \eps_l \in R_1$ with $i\in I$ and distinct
$j,l\in J$. We let $\gam = \eps_r - \eps_s$ and $v_\al = a E^-_\al = a
E_{ji}$, $u_\be = b E_\be^+ = bE_{il}$, $w_\ga = cE_\ga^- = c E_{sr}$. Then
\begin{align*}
 &\dd {\dd{\xmp(v_\al)} {\xpp(u_\be)}} {\xmp(w_\ga)}
   = \dd {\dd{\x_{ji}(-a)} {\x_{il}(b)}} {\x_{sr}(-c)}
    = \dd {\x_{jl}(-ab)} {\x_{sr}(-c)} =: A, \\
&  \{ v_\al \, u_\be \, w_\ga\} = \{aE_{ji} \, bE_{il}\, cE_{sr}\} =
    \de_{ls} \, cba E_{jr} =: B.
\end{align*}
If $l=s$ then $A=\x_{jr}(abc) = \xmp( -abc E_{jr}) $, while $B=abc E_{jr}$. If
$l\ne s$ then $A=1$ since $r \ne j$ and $B=0$.}

\subsection{The Steinberg group $\St(\MM_{IJ}(A),\R)$.} \label{news}
We keep the setting of \eqref{ano} -- \eqref{rw}. In \ref{rw}
we defined a pair of matrix spaces,
\[ (V^+, V^-) = \big(\Mat_{IJ}(A), \, \Mat_{JI}(A) \big) =: \MM_{IJ}(A),\]
and Jordan triple products
\[ \{ \cdots \} \co V\si \times V\msi \times V\si \to V\si, \quad (x,y,z) \mapsto
\{x\, y\, z\} = xyz + zyx \] for $\sig\in \{+,- \}$. In \eqref{rw-a2} and \eqref{rw-b2}
we also introduced a family
\[ \R = \textstyle (V_\al)_{\al \in R_1}, \quad V_\al = (V_\al^+, V_\al^-)
\]
of pairs of subgroups with the property that $V\si = \bigoplus_{\al \in R_1} V_\al\si$.
Furthermore, in \eqref{rw0} we found a new generating set for $\St_N(A)$, and
we rewrote some of the relations defining $\St_N(A)$ in terms of this
new generating set. It is then natural to define a new Steinberg group using
these new generators and relations. \sm %

The {\it Steinberg group\/} $\St( \MM_{IJ}(A), \R)$
is the group presented by
\begin{itemize}
  \item[$\bu$] generators $\xp(u)$, $u\in V^+$, and $\xm(v)$, $v\in V^-$,
      and \sm
 \item the relations \eqref{rw2}, \eqref{rw3}, \eqref{rw4}, \eqref{rw-d2}
     and \eqref{rw-e2}. Taking $\sig \in \{+, -\}$ these are
 \begin{align}
   \tag{EJ1}\label{ej1} &\xs (u+ u') = \xs(u) \, \xs(u') \quad  \hbox{for $u, u'\in V\si$}, \\
  \tag{EJ2}\label{ej2}  &\dd {\xp(u)} {\xm(v)} = 1 \quad
              \hbox{for $(u,v) \in V^+_\al \times V^-_\be$, $\alp \perp\beta$}, \\
  \tag{EJ3}\label{ej3}  & \dd  {\dd{\xs(u)} {\xms(v)}} {\xm(z)}  = \xs(- \{ u \, v \, z\}) \\
    & \qquad \hbox{for $u_\al \in V_\al\si$, $v \in V_\be\msi$, $z \in V\si$ with
    $\alp \edge \beta$.} \nonumber
 \end{align}
\end{itemize}
(The letter ``J'' in (EJi) stands for ``Jordan'', to be explained in the next
section.) \sm

From the review above, it is clear that we have a surjective homomorphism of
groups
\begin{equation}
  \label{news1} \Phi \co \St(\MM_{IJ}(A),\R) \to \St_N(A), \quad
       \xs(u) \mapsto \xs'(u)
\end{equation}
where  $\xs'$ is defined in \eqref{xpp-def} and \eqref{xmp-def}. Moreover, composing $\Phi$ with the surjective group homomorphisms $\wp_N \co \St_N(A)
\to \EL_N(A)$ of \eqref{stgwp} yields another surjective group homomorphism
\begin{equation}
 \label{news2} \p_N \co \St(\MM_{IJ}(A), \R) \to \EL_N(A),
  \qquad \xp(aE_{ij}) \mapsto \e_{ij}(a), \quad \xm(aE_{ji}) \mapsto \e_{ji}(-a)
\end{equation}
and hence a commutative diagram
\begin{equation}  \label{news3}  \vcenter{
 \xymatrix{ \St(\MM_{IJ}(A), \R) \ar[rr]^\Phi \ar[dr]_{\p_N} &&
        \St_N(A) \ar[dl]^{\wp_N} \\    & \EL_N(A)
} } \end{equation}
\bigskip

\begin{theorem}[{\cite{LN}}] \label{mth}
The map $\Phi$ of \eqref{news1} is an isomorphism of groups.
\sm

In particular, $\St(\MM_{IJ}(A),\R)$ is centrally closed if $|N|\ge 5$
and  $\p_N$ is a universal central extension of\/ $\EL(A)$ if $N=\NN$.
\end{theorem}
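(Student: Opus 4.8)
\emph{Proof plan.} The homomorphism $\Phi$ of \eqref{news1} is already surjective: by \ref{rw} the relations \eqref{ej1}--\eqref{ej3} are consequences of the defining relations of $\St_N(A)$ once one passes to the generating set $\xpp(V^+)\cup\xmp(V^-)$ of \eqref{rw0}. So everything hinges on producing an inverse $\Psi\co\St_N(A)\to\St(\MM_{IJ}(A),\R)$, and for this I use the presentation of $\St_N(A)$ recalled in \ref{stgr}. I assign to each generator $\x_{ij}(a)$ an element $y_{ij}(a)\in\St(\MM_{IJ}(A),\R)$ as follows: for $(i,j)\in I\times J$ put $y_{ij}(a)=\xp(aE_{ij})$; for $(j,i)\in J\times I$ put $y_{ji}(a)=\xm(-aE_{ji})$; and for a pair with both indices in $I$ or both in $J$ I \emph{fix once and for all} one of the double-commutator expressions on the right of \eqref{less1}, resp.\ \eqref{less2}, now read inside $\St(\MM_{IJ}(A),\R)$ via the generators $\xp,\xm$ (so that each such $y_\mu(a)$ is a single commutator $\dd{\xp(u)}{\xm(v)}$ of the new generators). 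The task is to verify that the $y_{ij}(a)$ satisfy the Steinberg relations \eqref{elg1}--\eqref{elg3}; this produces $\Psi$, and a short check then finishes the proof. Indeed $\Phi\circ\Psi=\id$ holds on the generators $\x_{ij}(a)$ by construction, while $\Psi\circ\Phi=\id$ holds on the generating set $\xp(V^+)\cup\xm(V^-)$: for $u=\sum_{\al\in R_1}u_\al\in V^+$ one has $\Psi(\Phi(\xp(u)))=\Psi\big(\prod_{\al}\x_\al(a_\al)\big)=\prod_{\al}\xp(a_\al E_\al^+)=\xp(u)$ by \eqref{ej1}, and symmetrically for $\xm$.

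The substance is the verification that the $y_{ij}(a)$ satisfy \eqref{elg1}--\eqref{elg3} in the group $\St(\MM_{IJ}(A),\R)$, which is presented by \eqref{ej1}--\eqref{ej3} \emph{alone}. I would organize this along the case division of \ref{rw}, according to the position of the two roots $\eps_i-\eps_j$ and $\eps_k-\eps_l$ in the $3$-grading $R=R_1\dotcup R_0\dotcup R_{-1}$. If both roots lie in $R_1$, or both in $R_{-1}$: \eqref{ej1} forces $\xp$, resp.\ $\xm$, to take values in an abelian subgroup, which yields the relevant instances of \eqref{elg1} and \eqref{elg2} at once. If one root is in $R_1$ and the other in $R_{-1}$: the orthogonal subcase is exactly \eqref{ej2}, while the ``chaining'' subcases $\al\edge(-\be)$ translate, by the very definition of the $R_0$-level $y_\mu$'s, into the identities \eqref{less1}/\eqref{less2} read in the new group, where \eqref{rw-d1} (equivalently \eqref{ej3}) together with \eqref{ej1} supplies the needed equality. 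The mixed cases, one root in $R_0$ and the other in $R_{\pm1}$, come from \eqref{ej3} and the elementary commutator expansions $\dd g{h_1 h_2}=\dd g{h_1}\,\dd g{h_2}\,\dd{\dd{h_2}g}{h_1}$ and $\dd{g_1 g_2}h=g_1\,\dd{g_2}h\,g_1\me\cdot\dd{g_1}h$, used to pass between the single-commutator form of $y_\mu$ and the double-commutator form in \eqref{ej3} --- exactly the manipulation already used in \ref{rw} to go from \eqref{rw-d1} to \eqref{rw-d2}, now run in reverse and in both sign variants.

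The main obstacle is the case in which both roots lie in $R_0=\rma_I\times\rma_J$: one must derive the Steinberg relations \emph{internal} to $R_0$ --- typically $\dd{y_{kl}(a)}{y_{lm}(b)}=y_{km}(ab)$ for distinct $k,l,m\in I$, and the corresponding commuting relations --- from \eqref{ej1}--\eqref{ej3}, which only ever mention the $\xp,\xm$ generators. The plan is to substitute the chosen expressions $y_\mu=\dd{\xp(u)}{\xm(v)}$ and $y_\nu=\dd{\xp(u')}{\xm(v')}$ into the outer commutator and unfold it by repeated use of the Hall--Witt identity and the two expansions above, until every term is either an \eqref{ej2}-commutator or an \eqref{ej3}-type double commutator; collecting the resulting Jordan triple products and using that matrix multiplication is associative (which is precisely what makes $\{\cdots\}$ behave consistently, i.e.\ what makes $(V^+,V^-)=\MM_{IJ}(A)$ a Jordan pair satisfying the expected ``shift'' identities) then produces $y_{\mu+\nu}(ab)$. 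The same bookkeeping shows, a posteriori, that $y_\mu(a)$ is independent of the representation $\mu=\al-\be$ chosen in its definition, since two such representations differ by a relation that has by then been established. Alternatively one may invoke the structural lemma of \cite{LN}, where these ``shift'' and associativity relations are proved once and for all for Steinberg groups of root graded Jordan pairs; in the concrete situation at hand everything reduces to matrix identities.

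Finally, the ``in particular'' is formal. Since $\Phi$ is an isomorphism, $\St(\MM_{IJ}(A),\R)\cong\St_N(A)$, and $\St_N(A)$ is centrally closed whenever $|N|\ge5$: for $N=[1,n]$ with $n\ge5$ by the Kervaire--Milnor--Steinberg Theorem~\ref{kms}\eqref{kms-a}, and for $N=\NN$ because $\wp\co\St(A)\to\EL(A)$ is a universal central extension by \ref{kms}\eqref{kms-b}, whence $\St(A)$ is centrally closed by \ref{centex}\eqref{centex-b}; so $\St(\MM_{IJ}(A),\R)$ is centrally closed for $|N|\ge5$. When $N=\NN$, the commutativity of \eqref{news3} gives $\p_N=\wp\circ\Phi$, and precomposing the universal central extension $\wp$ with the isomorphism $\Phi$ again yields a universal central extension (transport of structure along $\Phi$); hence $\p_N$ is a universal central extension of $\EL(A)$.
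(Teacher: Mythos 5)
Your proposal follows the same route as the paper's own (sketched) proof: construct the inverse $\Psi\co\St_N(A)\to\St(\MM_{IJ}(A),\R)$ by extending the generators $\xp(aE_{ij})$, $\xm(-aE_{ji})$ to a family satisfying the defining relations \eqref{elg1}--\eqref{elg3} via the double-commutator expressions of \eqref{less1}--\eqref{less2}, check that $\Phi$ and $\Psi$ are mutually inverse on generators, and deduce the second assertion from the Kervaire--Milnor--Steinberg Theorem~\ref{kms}. The paper leaves the relation-checking at essentially the same level of detail as you do (deferring the full verification, and an alternative categorical proof, to \cite[24.18]{LN}), so your plan is correct and matches the paper's argument.
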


\begin{proof}
To prove bijectivity of $\Phi$ is bijective, a canonical approach is to show
  that the family of $\xp(aE_{ij})$ and $\xm(-bE_{ij} )\in \St(\MM_{IJ}(A),
  \R)$ can be extended to a family of elements satisfying the defining
  relations \eqref{elg1} -- \eqref{elg3} of $\St_N(A)$. As a consequence,
  this yields a group homomorphism $\Psi \co \St_N(A) \to \St(\MM_{IJ}(A),
  \R)$ such that $\Psi \circ \Phi$ and $\Phi \circ \Psi$ are the identity on
  the respective generators and therefore also on the corresponding groups.
  Another proof of the bijectivity of $\Phi$ is given in \cite[24.18]{LN},
  based on the interpretation of both groups as initial objects in an
  appropriate category of groups mapping onto $\EL_N(A)$.

The second part of the theorem follows from the Kervaire-Milnor-Steinberg
Theorem~\ref{kms}. \end{proof}

\subsection{Another look at $\EL_N(A)$.} \label{exc}
It follows from the existence of the surjective group homomorphism $\p_N$ of
\eqref{news2} that $\EL_N(A)$ is generated by $\p_N(\xp(V^+)\cup \xm(V^-))$
and that the matrices in this image satisfy the relations \eqref{ej1} --
\eqref{ej3}. It is instructive to verify this directly.
\sm

For $(u,v)\in \MM_{IJ}(A)$ we define elements $\e_+(u)$ and $\e_-(v)$ of the
ring $\Mat_N(A)_{\rm ex}$ of \ref{elg} by
\begin{equation} \label{exc0}
   \e_+(u) = \begin{pmatrix} {\bf 1}_I & u \\ 0 & {\bf 1}_J \end{pmatrix},
\qquad
 \e_-(v) = \begin{pmatrix} {\bf 1}_I & 0 \\ -v & {\bf 1}_J \end{pmatrix}.
\end{equation}
Then clearly
\begin{equation}\label{exc1}
  \e_+(u + u') = \e_+(u) \, \e_+(u') \quad \hbox{and} \quad
 \e_-(v + v') = \e_-(v) \, \e_-(v').
\end{equation}
In particular, the matrices $\e_+(u)$ and $\e_-(v)$ are invertible with
inverses $\e_+(u)\me = \e_+(-u)$ and $\e_-(v)\me = \e_-(-v)$. Since
$\e_+(aE_{ij}) = \e_{ij}(a)$ and $\e_-(vE_{ji}) = \e_{ji}(-v)$ for $(ij) \in
I \times J$, the equations \eqref{exc1} also show that $\e_+(u) \in \EL_N(A)$
and $\e_-(v) \in \EL_N(A)$. By straightforward matrix multiplication one
obtains
\begin{equation}\label{exc2}
\dd {\e_+(u)} {\e_-(v)} =
 \begin{pmatrix}{\bf 1}_I - uv+uvuv & uvu \\ vuv& {\bf 1}_J + vu\end{pmatrix}.
\end{equation}
In particular, taking $(u,v)$ with $vu=0$ or $uv=0$, this proves
\[
 \begin{pmatrix}{\bf 1}_I - uv & 0\\ 0& {\bf 1}_J\end{pmatrix} \in \EL_N(A) \quad
 \hbox{and} \quad
 \begin{pmatrix}{\bf 1}_I  & 0\\ 0& {\bf 1}_J +vu\end{pmatrix} \in \EL_N(A).
\]
Specifying $(u,v)$ even more, one then easily sees that all elementary
matrices $\e_{kl}(a)$ with $(k, l)\in I\times I$ or $(k,l)\in J\times J$ lie
in the subgroup of $\EL_N(A)$ generated by $\e_+(V^+) \cup \e_-(V^-)$.
Therefore, this subgroup equals $\EL_N(A)$. \sm

The relation \eqref{ej1} is \eqref{exc1}, and the relation \eqref{ej2}
follows from \eqref{exc2} since for $(u,v) \in V_\al^+ \times V_\al^-$ with
$\alp \perp \beta$ we have $uv=0$ and $vu=0$. In order to prove \eqref{ej3}
in case $\sig=+$, let $(u,v) \in V_\al^+ \times V_\be^-$ with $\alp \perp \beta$ and
let $z\in V^+$ arbitrary. Then $uvu=0=uvzvu$, $vuv=0$ and $({\bf 1}_J +
vu){}\me = {\bf 1}_J - vu$. Hence, by \eqref{exc2},
\begin{align*}
\dd {\dd {\e_+(u)}{\e_-(v)}} {\e_+(z)} &=
\DBL \begin{pmatrix}{\bf 1}_I - uv & 0\\ 0& ({\bf 1}_J -vu)\me\end{pmatrix}, \,
    \begin{pmatrix}{\bf 1}_I & z\\ 0& {\bf 1}_J \end{pmatrix} \DBR
\\
&=\begin{pmatrix}{\bf 1}_I & -z + (1-uv)z(1-vu) \\ 0& {\bf 1}_J \end{pmatrix}
 = \e_+(- \{u\, v\, z\}).
\end{align*}
The relation \eqref{ej3} for $\sig=-$ can be verified in the same way.
\lv{
For $(v,u) \in V_\al^- \times V^+_\be$ with $\alp \edge \beta$ and arbitrary
$w\in V^-$ we have $vuv=0= vuwvu$, $uvu=0$ and  $({\bf 1}_I + uv){}\me = {\bf
1}_I - uv$. Hence, by \eqref{exc2},
\begin{align*}
\dd {\dd {\e_-(v)}{\e_+(u)}} {\e_-(w)} &=
\DBL \begin{pmatrix} ({\bf 1}_I - uv)\me & 0\\ 0& {\bf 1}_J -vu \end{pmatrix}, \,
    \begin{pmatrix}{\bf 1}_I & 0\\ -w& {\bf 1}_J \end{pmatrix} \DBR
\\
&=\begin{pmatrix}{\bf 1}_I & 0 \\ w -(1-vu)w(1-uv)& {\bf 1}_J \end{pmatrix}
 = \begin{pmatrix}{\bf 1}_I & 0\\ vuw+wuv& {\bf 1}_J \end{pmatrix}
 \\ & = \e_-(- \{v\, u\, w\}).
\end{align*}}

To put this example in the general setting of the following section \S\ref{sec:gen} we point out that the calculations above are not only valid for matrices of finite or countable size $|N|$, but hold for $N$ of arbitrary cardinality.


\section{Generalizations} \label{sec:gen}

In this section we generalize the Steinberg groups considered in
\S\ref{sec:elg}. The generalization has a combinatorial aspect, $3$-graded
root systems, and an algebraic aspect, root graded Jordan pairs. They are
presented in \ref{lfrs} -- \ref{3gra} and \ref{defjp} -- \ref{rgjp}
respectively.  We define the Steinberg group of a root graded Jordan pair
(\ref{stvr}) and state the Jordan pair version of the
Kervaire-Milnor-Steinberg Theorem  (\ref{thmA} and \ref{thmB}). Since the
elementary linear group only makes sense for special Jordan pairs, we replac
it by its central quotient which can be defined for any Jordan pair: the
projective elementary group $\PE(V)$ of a Jordan pair $V$ defined in terms of
the Tits-Kantor-Koecher algebra $\tkk(V)$ (\ref{pev}). We discuss $\tkk(V)$
and $\PE(V)$ for the Jordan pair of rectangular matrices over a ring in
\ref{rect-pvex} and \ref{pvn}. \sm

\subsection{Locally finite root systems \cite{lfrs}}\label{lfrs}
We use $\lan \cdot, \cdot \ran$ to denote the canonical pairing
between a real vector space $X$ of arbitrary dimension and its dual space $X^*$, thus $\lan x, \vphi \ran = \vphi(x)$ for $x\in X$ and $\vphi \in X^*$.  If $\vphi \in X^*$ satisfies $\lan \alp, \vphi\ran = 2$, we define the
{\em reflection\/}  $s_{\alp, \vphi} \in \GL(X)$ by
$$
s_{\alp, \vphi}(x) = x - \lan x, \vphi\ran \alp.
$$

 A {\em locally finite
root system\/} is a pair $(R,X)$ consisting of a real vector space $X$ and a
subset $R\subset X$ satisfying the axioms \eqref{lfrs-i} -- \eqref{lfrs-iv}
below.
\ms
\begin{enumerate}[(i)]
\item\label{lfrs-i}  $R$ spans $X$ as a real vector space and $0 \in R$,

\item \label{lfrs-ii} for every $\alp \in R\ti = R \setminus \{0\}$ there exists $\al\ch \in X^*$  such that $\lan \alp, \alp\ch\ran = 2$ and $s_{\al, \alp\ch}(R) = R$.

\item\label{lfrs-iii}  $\lan \alp, \beta\ch\ran \in \ZZ$ for all $\alp,
    \beta \in R\ti$.

\item\label{lfrs-iv}  $R$ is locally finite in the sense that $R \cap Y$ is
    finite for every finite-dimensional subspace of $X$.
\end{enumerate}
\sm

Locally finite root systems form a category $\RS$, in which a morphism $f\co
(R, X) \to (S, Y)$ is an $\RR$-linear map with $f(R)\subset S$. In this
category, an isomorphism $f\co (R,X) \to (S,Y)$ is a vector space isomorphism
$f\co X \to Y$ with $f(R) = S$. Such an isomorphism necessarily satisfies $\lan f(\alp), f(\beta)\ch\ran = \lan \alp, \beta\ch\ran$ for all $\alp,\beta \in R\ti$. \ms

{\bf Remarks, facts and more definitions.} \begin{inparaenum}[(a)]
  \item The linear form $\alp\ch$ in \eqref{lfrs-ii} is uniquely determined by the two    conditions in \eqref{lfrs-ii}. Therefore, we simply write
$s_\alp$ instead of $s_{\alp, \alp\ch}$ in the future.
\sm

\item Our standard reference for locally finite root systems is \cite{lfrs}.
    As in \cite{lfrs} we will also here abbreviate the term ``locally finite
    root system'' by {\em root system\/}. Then a {\em finite root system\/}
    is a root system $(R,X)$ with $R$ a finite set, equivalently $\dim X
    <\infty$. Finite root systems are the root systems studied for example in
    \cite[Ch.~VI]{brac}. That \cite{brac} assumes $0\notin R$ does not pose any problem in applying the results developed there.

    The real vector space $X$ of a root system $(R,X)$ is usually not
    important. We will therefore often just refer to $R$ rather
    than to $(R,X)$ as  a root system. \sm

 \item As in \cite{lfrs} and again in \cite[\S2]{LN} we assume here that
     $0\in R$, which is more natural from a categorical point of view.
       In \cite[\S2]{LN} the real vector space $X$ is replaced by a
    free abelian group $X$ and condition~\eqref{lfrs-iv} becomes that $R\cap
    Y$ be finite for every finitely generated subgroup $Y$ of $X$. With the
    obvious concept of a morphism, this defines a category of root systems
    over the integers, which is equivalent to the category $\RS$
    \cite[Prop.~2.9]{LN}. \sm

\item A locally finite root system need not be {\em reduced\/} in the sense
    that $\RR \alp \cap R = \{\pm \alp\}$ for every $\alp \in R\ti$. The {\em
    rank\/} of a root system $(R,X)$ is defined as the dimension of the  real vector space $X$. \sm

\item The {\em direct sum\/} of a family $(R^{(j)},
    X^{(j)})_{j\in J}$ of root systems is the pair
    \[ \textstyle \big(\bigcup_{j\in J}  R^{(j)},\;\bigoplus_{j\in J} X^{(j)}\big),
     \]
     which is again a root system
    \cite[3.10]{lfrs}, traditionally written as $R=\bigoplus_{j\in J} R^{(j)}$. A
    non-empty root system is called {\em irreducible\/} if it is not
    isomorphic to a direct sum of two non-empty root systems. Every root
    system uniquely decomposes as a direct sum of irreducible root systems,
    called its {\em irreducible components\/} \cite[3.13]{lfrs}. \sm

\item \label{lfrs-f} Every root system $(R,X)$ admits an inner product
    $\inpr\co X \times X \to \RR$, which is {\em invariant\/} in the sense
    that $(s_\al(x) \mid s_\al(y)) = (x\mid y)$ holds for all $\alp \in R\ti$
    and $x,y\in X$, equivalently
    \begin{equation} \label{lfrs1}
    \lan \beta, \alp\ch \ran = 2\, \frac{(\beta \mid  \alp)}{(\alp \mid  \alp)} \qquad
    \hbox{for all $\alp,\beta \in R\ti$}
    \end{equation}
\cite[4.2]{lfrs}. If $R$ is irreducible, $\inpr$ is unique up to a non-zero
scalar. It follows that the definition of a root system given in \cite{n:cr}
is equivalent to the definition above, and that a finite reduced root system
is the same as a ``root system'' in \cite{hum}, again up to $0 \notin R$.
\end{inparaenum}

\subsection{Classification of root systems} \label{class} We first present,
as examples, the {\em classical root systems\/} $\rma_I, \ldots,
\rmbc_I$. Let $I$ be a set of cardinality $|I|\ge 2$ and let $X=
\bigoplus_{i\in I} \RR\eps_i$ be the $\RR$-vector space with basis
$(\eps_i)_{i\in I}$. Define
\begin{align}
  \label{class-A} \rma_I &= \{ \eps_i - \eps_j : i,j\in I\}, \\
  \label{class-D} \rmd_I &= \rma_I \cup \{ \pm (\eps_i + \eps_j) : i \ne j \},\\
   \label{class-B} \rmb_I &= \rmd_I \cup \{ \pm \eps_i : i \in I\}, \\
\label{class-C} \rmc_I &=\rmd_I \cup \{ \pm 2\eps_i : i \in I\}, \\
\label{class-BC} \rmbc_I &= \rmb_I \cup \rmc_I.
\end{align}

Then $\rma_I$ is a root system in $\dot X = \Ker (t)$
where $t\in X^*$ is defined
by $t(\eps_i) = 1$, $i\in I$. Its rank is therefore $|I|-1$. The notation
$\rma$ instead of the traditional $\rmaf$ is meant to indicate this fact.
Observe that $\rma_\NN$ is the root system $R$ of \ref{ano}. All other sets
$\rmd_I, \ldots, \rmbc_I$,  are root systems in $X$, whence of rank $|I|$.
The root systems $\rma_I$, $\rmb_I$, $\rmc_I$ and $\rmd_I$ are reduced, while
$\rmbc_I$ is not.

The isomorphism class of a classical root system only depends on the
cardinality of the set $I$. In particular, when $I$ is finite of cardinality $n$
we will use the index $n$ instead of $I$. Thus, $\rmd_n = \rmd_{\{1, \ldots,
n\}}$ etc. Note $\rma_{n+1} = \rma_{\{0,1,\ldots, n\}} = \rmaf_n$ in the
traditional notation.

The standard inner product $\inpr$, defined by $(\eps_i | \eps_j) =
\del_{ij}$, is an invariant inner product in the sense of
\ref{lfrs}\eqref{lfrs-f}. With the exception of $\rmd_2 = \rmaf_1 \oplus
\rmaf_1$, the root systems $\rma_I, \ldots, \rmbc_I$ are irreducible.
Apart from the low-rank isomorphisms $\rmb_2 \cong \rmc_2$, $\rmd_3 \cong
\rmaf_3$, they are pairwise non-isomorphic. \sm

The classification of root systems \cite[Thm.~8.4]{lfrs} says that {\em an
irreducible root system is either isomorphic to a classical root system or to
an exceptional finite root system.}

\subsection{$3$-graded root systems.} \label{3gra} A {\em $3$-grading\/} of a
root system $(R,X)$ is a partition $R = R_1 \dotcup R_0 \dotcup R_{-1}$
satisfying  the following conditions \eqref{3gra-i} -- \eqref{3gra-iii} below:

\begin{enumerate}[(i)]
\item \label{3gra-i} $R_{-1} = - R_1$;

\item\label{3gra-ii}  $(R_i + R_j) \cap R \subset R_{i+j}$ for $i,j\in \{1,
    0, -1\}$, with the understanding that $R_k = \emptyset$ for $k\notin
    \{1, 0, -1\}$

\item\label{3gra-iii} $R_1 + R_{-1} = R_0$, i.e., every root in $R_0$ is a
    difference of two roots in $R_1$.
\end{enumerate}
In particular \eqref{3gra-ii} says that the sum of two roots in $R_1$ is
    never a root and $(R_1 + R_{-1}) \cap R \subset R_0$, a condition
    which is strengthened in \eqref{3gra-iii}. \sm

Since a $3$-grading of a root system $(R,X)$ is determined by the subset
$R_1$ of $R$, we will denote a $3$-graded root system by $(R,R_1, X)$ or simply by $(R, R_1)$. A {\em $3$-graded root system\/}  is a root system equipped with a
$3$-grading. An {\em isomorphism\/} $f \co (R, R_1, X)  \to (S, S_1, Y)$
between $3$-graded root systems is a vector space isomorphism $f\co X \to Y$
satisfying $f(R_1) = S_1$, hence also $f(R_i) = S_i$ for $i\in \{1,0,-1\}$,
and is therefore an isomorphism $f\co(R, X) \to (S, Y)$ of the underlying
root systems. References for $3$-graded root systems are \cite[\S17,
\S18]{lfrs}, \cite[Ch.~IV]{LN} and \cite{n:cr}. \ms

{\bf Some facts and examples.} \begin{inparaenum}[(a)]
  \item \label{3gra-a} The decomposition \eqref{less3gr} of the root system
      $R=\rma_\NN$ is a $3$-grading. The restrictions on $N$ imposed in
      \ref{ano} are not necessary for the definition of a $3$-graded root
      system, as we have seen in \ref{class} for the root system $\rma_N$.
      Any non-trivial partition $N=I \dotcup J$ induces a $3$-grading of
      $\rma_N$ as in \ref{less}, denoted $\rma_N^I$. Thus, the $1$-part of
      the $3$-graded root system $\rma_N^I$ is
      \begin{equation}\label{3gra-a1}
       ( \rma_N^I)_1 = \{ \eps_i - \eps_j : i\in I , j\in N \setminus I\}.
\end{equation}
      Every $3$-grading of $\rma_N$ is obtained in this way for a non-empty
      proper subset $I\subset N$. \sm

  \item \label{3gra-bb} A $3$-grading $\rmb_I\qf$ of the root system $\rmb_I$
      is obtained by choosing a distinguished element of $I$, say $0\in I$,
      and putting $R_1 = \{ \eps_0 \} \cup \{ \eps_0 \pm \eps_i : 0 \ne i\in
      I \}$. \sm

  \item \label{3gra-b} The root system $R=\rmc_I$ has a $3$-grading, denoted
      $\rmc_I\her$, whose $1$-part is $ R_1 = \{ \eps_i + \eps_j : i,j\in
      I\}$.  Note $R_0 = \{\eps_i - \eps_j : i, j \in I\} \cong \rma_I$. \sm

 \item The root system $\rmd_I$, $|I|\ge 4$, is a subsystem of $\rmb_I$ and
     $\rmc_I$. The $3$-gradings of these two root systems, defined in
     \eqref{3gra-bb} and \eqref{3gra-b}, induce $3$-gradings $\rmd_I\qf$ and
     $\rmd_I\alt$ of $\rmd_I$. The first of these is determined by $R_1 = \{
     \eps_0 \pm \eps_i : 0 \ne i\in I \}$ and the second by $R_1 = \{ \eps_i
     + \eps_j : i,j\in I , i \ne j \}$. It is known that $\rmd_I\qf \cong
     \rmd_I\alt$ if $|I|=4$, but $\rmd_I\qf \not\cong \rmd_I\alt$ if $|I| \ge
     5$.
   \sm

\item \label{3gra-c} The $3$-gradings of a root system $R$ are determined by
    the $3$-gradings of its irreducible components $(R^{(j)})_{j\in J}$ as
    follows.

    If $(R, R_1)$ is a $3$-grading, then $(R^{(j)}, R_1 \cap
    R^{(j)})$ is a $3$-grading for every $j\in J$. Conversely, given
    $3$-gradings $(R^{(j)}, R_1^{(j)})$ for every $j$, the set $R_1 =
    \bigcup_j R_1^{(j)}$ defines a $3$-grading of $R$.

   These easy observations reduce the classification of $3$-graded root
   systems to the case of irreducible root systems. Their
   classification is given in \cite[17.8, 17.9]{lfrs}. It turns out that an
   irreducible root system has a $3$-grading if and only if it is not
   isomorphic to $\rme_8$, $\rmf_4$ and $\rmg_2$. Some irreducible root
   systems have several non-isomorphic $3$-gradings, such as $\rma_N$ or
   $\rmd_I$. But every $3$-grading of $\rmc_I$ is isomorphic to the
   $3$-grading $\rmc_I\her$ of \eqref{3gra-b}. \sm

\item The relations $\perp$ and $\edge$ introduced in \eqref{anopdef} in case
    $R=\rma_\NN$ can be defined for any root system $R$ without using an
    invariant inner product. For $\alp, \beta \in R\ti$ we put
\begin{align*}
  \alp \perp \beta \quad &\iff \quad \lan \alp, \beta\ch\ran = 0, \quad \hbox{equivalently
    $\lan \beta, \alp\ch \ran = 0$,}\\
   \alp \edge \beta \quad &\iff \quad \lan \alp, \beta\ch\ran = 1 = \lan \beta, \alp \ch\ran \\
  \alp \lra \beta \quad &\iff \quad \lan \alp, \beta\ch\ran = 2, \; \lan\beta, \alp\ch\ran = 1.
\end{align*}
The formula \eqref{lfrs1} shows that the definitions of $\perp$ and $\edge$
above generalize \eqref{anopdef}. The relation $\lra$ occurs for example in
the root system $\rmc_I$: we have $2\eps_i \lra \eps_i + \eps_j$ for $\ne j$.
(In \cite{lfrs} the notation $\coll$ and $\vog$ is used in place of
$\edge$ and $\lra$, respectively.) 
\sm

\item
Given $\alp$, $\beta
    \in R_1$, exactly one of these relations holds:
\begin{equation}\label{3gra1}
\alp = \beta \quad \hbox{or}\quad \alp \lra \beta \quad\hbox{or} \quad
\alp \lla \beta \quad \hbox{or} \quad \alp \edge \beta \quad \hbox{or} \quad \alp \perp \beta.
\end{equation}
Moreover, again for $\alp, \beta \in R_1$,
\begin{equation} \label{3gra2}
  2\alp- \beta \in R \quad \iff \quad 2\alp - \beta \in R_1 \quad \iff \quad \alp \lla \beta
\hbox{ or $\alp = \beta$.}\end{equation}
\end{inparaenum}
\lv{
We will also need, for $\alp, \beta, \gam \in R_1$,
\begin{equation} \label{3gra4}
\begin{split}
&\hbox{$2\beta - \gam \in R_1$ and $2\alp -(2\beta - \gam) \in R_1$} \quad \iff\\
&\qquad \hbox{$\alp = \beta = \gam$ or $\alp= \beta \lla \gam$ or $\alp \lla \beta = \gam$ or
 $\alp \edge \beta \lla \gam \perp \alp$}
\end{split} \end{equation}

Proof of \eqref{3gra4}: For $\implies$ we can assume that $\alp, \beta$ and
$\gam$ are pairwise distinct and that $\del = 2\beta - \gam \in R_1$. Then,
by \eqref{3gra2}, $\beta \lla \gam$ and $\alp \lla \del$. We use $\beta\ch =
\del\ch + \gam\ch$ (\cite[16.11]{LN}) to calculate
\begin{align*}
  2 & = \lan \del, \alp\ch\ran = 2\lan \beta, \alp\ch\ran - \lan\gam ,\alp\ch\ran , \qquad
   \qquad \quad    \hbox{so $\lan \gam,\alp\ch\ran \in 2\NN$},\\
\lan \alp,\beta\ch\ran & = \lan \alp,\del\ch\ran + \lan \alp,\gam\ch\ran
   = 1 + \lan \alp,\gam\ch\ran \le 2, \qquad \hbox{so $\lan \alp,\gam\ch\ran \le 1$}
\end{align*}
It then follows that $\alp\perp\gam$, and then $\lan \beta, \alp\ch\ran = 1 =
\lan \alp,\beta\ch\ran$, whence $\alp \edge \beta$.

For the other implication we only need to consider the last case. From $\beta
\lla \gam$ we get $\del = 2\beta - \gam \in R_1$. One calculates $\lan
\alp,\del\ch\ran = \lan \alp, \beta\ch\ran - \lan \alp,\gam\ch\ran = 1 - 0
=1$ and $\lan \del,\alp\ch\ran = 2\lan \beta, \alp\ch\ran - \lan \gam,
\alp\ch\ran = 2\cdot 1 - 0 = 2$, whence $\alp\lla \del$ and therefore $2\alp
- (2\beta - \gam ) = 2\alp - \del \in R_1$ by \eqref{3gra2}.}

\subsection{Jordan pairs.}\label{defjp} This subsection contains a very short
introduction to Jordan pairs over a commutative ring $k$,
although for the purpose of this paper $k=\ZZ$ is completely sufficient.
We will only present what is needed to understand this paper. A more detailed
but still concise introduction to Jordan pairs is given in \cite[\S6]{LN};
the standard reference for Jordan pairs is \cite{jp}. \sm

We have already seen an example of a Jordan pair in \ref{rw}: the rectangular
matrix pair $\MM_{IJ}(A) = (\Mat_{IJ}(A), \Mat_{JI}(A))$ equipped with the
Jordan triple product $\{x\, y\, z\} = xyz+zyx$ for $(x,y,z) \in V\si \times
V\msi \times V\si$ and $\sig=\pm$. The Jordan triple product is the
linearization with respect to $x$ of the expression $Q(x)y = xyx$, which did not play any role in \S\ref{sec:elg}, but which is the basic structure underlying Jordan pairs.
\sm

A {\em Jordan pair\/} is a pair $V=(V^+, V^-)$ of $k$-modules together with maps
\[
    Q\si \co V\si \times V\msi \to V\si, \quad (x,y) \mapsto Q\si(x)y, \qquad (\sig = \pm),
\]
which are quadratic in $x$ and linear in $y$ and which satisfy the identities
\eqref{jp1} -- \eqref{jp3} below in all base ring extensions. To define these
identities, we will simplify the notation and omit $\sig$, thus writing
$Q(x)y$ or simply $Q_x y$. This does not lead to any confusion, as long as one takes care that the
expressions make sense. Linearizing $Q_xy$ in $x$ gives
\[ Q_{x,z}y = Q(x,z)y = Q_{x+z}y - Q_x y - Q_z y,
\]
which we use to define the {\em Jordan triple product\/}
\[
   \{\cdots \} \co V\si \times V\msi \times V\si \to V\si, \quad (x,y,z) \mapsto
     \{x\, y\, z\} = Q_{x,z}y.
\]
To improve readability we will sometimes write $\{x, \, y, \, z\}$ instead of
$\{x\, y\, z\}$. If $K$ is a commutative associative unital
$k$-algebra we let $V\si_K = V\si\ot_k K$ and observe that there exist unique extensions of the $Q\si$ to quadratic-linear maps $Q \co V_K \times V_K \to V_K$.
The identities
required to hold for $x,z\in V_K\si$, $y,v\in V_K\msi$,
$\sig \in \{+,-\}$ and any $K$ as above are
\begin{align}
 \tag{JP1} \label{jp1} \{x, \, y, \, Q_x v\} &= Q_x\, \{y,\, x, \, v\},\\
 \tag{JP2}\label{jp2} \{Q_x y,\, y, \, z\} &= \{x,\, Q_yx, \, z\}, \\
 \tag{JP3}\label{jp3} Q_{Q_xy} v &= Q_x \, Q_y \, Q_z v.
\end{align}
A {\em homomorphism\/} $f\co V\to W$ of Jordan pairs is a pair  $f=(f_+,
f_-)$ of $k$-linear maps $f\usi \co V\si \to W\si$ satisfying $f\usi (
Q(x)y ) = Q\big( f\usi(x)\big) f\umsi(y)$ for all $(x,y) \in V\si \times V\msi$
and $\sig = \pm$. \ms

{\bf Remarks and more definitions.} \begin{inparaenum}[(a)] \item Instead of
requiring that \eqref{jp1} -- \eqref{jp3} hold for all extensions $K$, one
can demand that \eqref{jp1} -- \eqref{jp3} as well as all their
linearizations hold in $V$. For example, linearizing the identity \eqref{jp1}
with respect to $x$ gives the identity
\[  \{z, \, y, \, Q_x v\} +  \{x, \, y, \,  Q_{x,z} v\} = Q_{x,z} \, \{y,\, x, \, v\}
   + Q_x\, \{y, \, z, \, v\}
\]

\item \label{defjp-sub} If $V=(V^+, V^-)$ is a Jordan pair and $S=(S^+, S^-)$
    is a pair of submodules of $V$ satisfying $Q(S\si)S\msi \subset S\si$ for
    $\sig = \pm$, then $S$ is a Jordan pair with the induced operations,
    called a {\em subpair\/} of $V$. \sm

\item \label{defjp-idem} An {\em idempotent\/} in a Jordan pair $V$ is a
    pair $e=(e_+, e_-)\in V$
satisfying $Q(e_+)e_- = e_+$ and $Q(e_-)e_+ =
    e_-$. An idempotent $e$ gives rise to the {\em Peirce decomposition\/} of
    $V$,
\[ V\si = V\si_2(e) \oplus V\si_1(e) \oplus V\si_0(e), \qquad \sig = \pm, \]
where the {\em Peirce spaces\/} $V_i(e)=(V_i^+(e), V_i^-(e))$, $i=0,1,2$,
are given by
\begin{align*}
  V\si_2(e) &= \{ x\in V\si : Q(e\usi) Q(e\umsi)x = x \}, \\
   V\si_1(e) &= \{ x\in V\si : \{e\usi\, e\umsi\, x\} = x \}, \\
   V\si_0(e) &= \{ x\in V\si : Q(e\usi) x = 0 = \{e\usi\, e\umsi\, x\}\} .
\end{align*}
The Peirce spaces $V_i^\pm = V^\pm_i(e)$ satisfy the multiplication rules \begin{align}
Q(V_i\si)V_j\msi &\subset V\si_{2i-j}, & \{V\si_i\, V\msi_j\, V\si_l\}
&\subset V\si_{i-j+l}, \\
\{V_2\si \, V_0\msi\, V\si \} &=  0 = \{V_0\si\, V_2\msi\, V\si\},
\end{align}
where $i,j,l\in \{0,1,2\}$, with the understanding that $V\si_m = 0$ if $m \notin
\{0,1,2\}$. In particular, the $V_i=V_i(e)$ are subpairs of $V$. If $2 \in k\ti$ we have $V\si_i(e) = \{x \in V : \{e\si\, e\msi\, x\} = i x \}$ for $i=0,1,2$.
\end{inparaenum}

\subsection{Examples of Jordan pairs.}\label{exjp}
We now give concrete examples of Jordan pairs, to illustrate the abstract definition of \ref{defjp}. \sm

\begin{inparaenum}[(i)] \item \label{defjp-i} Any associative, not necessarily unital or
commutative $k$-algebra $A$ gives rise to a Jordan pair $V=(A,A)$ with
respect to the operations $Q_xy = xyx$.

Indeed, since a base ring extension of $A$ is again associative,
it suffices to verify the identities in $V$, where they
follows from the following calculations.
\begin{align*}
 \{x,y,Q_xv\} &= xy (xvx) + (xvx)yx = x(yxv + vxy)x = Q_x \{y, v, x\}, \\
 \{Q_xy, y,z\} & = (xyx)yz + zy(xyx) = x(yxy)z + z (yxy) x = \{x, Q_yx, z\}, \\
     Q_{Q_xy} v & = (xyx)v(xyx) = x (y (xvx) y) x = Q_x Q_y Q_x v.
\end{align*}

An idempotent $c$ of the associative algebra $A$, defined by $c^2=c$, induces
an associative Peirce decomposition $A=A_{11} \oplus A_{10} \oplus A_{01}
\oplus A_{00}$ with $A_{ij}= \{a\in A: ca= ia, \, ac=ja\}$. The pair
$e=(c,c)$ is an idempotent of the Jordan pair $V$ whose Peirce spaces are
$V_2\si(e) = A_{11}$, $V_1\si(e) = A_{10} \oplus A_{01}$ and $ V\si_0(e) =
A_{00}$. Not only $V_1(e)$ but also $(A_{10}, A_{01})$ and $(A_{01}, A_{10})$
are subpairs of $V$.

Not every  idempotent of $V$ has the form $(c,c)$, $c$ an idempotent of $A$. For example, if $u\in A\ti$ and $c$ is an idempotent of $A$, then $(uc, c u^{-1})$ is an idempotent of $V$ which, however, has the same Peirce spaces as $(c,c)$.
\sm

\item \label{defjp-ii} By \eqref{defjp-sub} and \eqref{defjp-i}, any pair
    $(S^+,S^-) \subset (A, A)$ of $k$-submodules closed under the operation
    $(x,y) \mapsto xyx$ is also a Jordan pair. Jordan pairs of this form are
    called {\em special}. Their Jordan triple product is
\begin{equation} \label{defjo-ii1}
  \{x\, y \, z \} = xyz + zyx \qquad (x,z\in S\si, y \in S\msi).
 \end{equation}
We next describe some important cases of special Jordan pairs. \sm

   \item \label{defjp-iir} Let $I$ and $J$ be non-empty sets. Let $N =
       I \dotcup J'$ where $J'$ is a set disjoint from $I$ and in bijection
       with $J$ under $j \mapsto j'$, and embed $\Mat_{IJ}(A)$ into the right
       upper corner of the associative algebra $\Mat_N(A)$. Similarly, we
       identify $\Mat_{JI}(A)$ with the left lower corner of $\Mat_N(A)$.
       Then
       \[ \MM_{IJ}(A) = \big( \Mat_{IJ}(A), \Mat_{JI}(A)\big) \]
is a subpair of the Jordan pair $\big( \Mat_N(A), \Mat_N(A)\big)$  and is
therefore a Jordan pair, as claimed at the  beginning of this subsection.
If $N$ is finite, $\MM_{IJ}(A)$ is of type $(A_{10}, A_{01})$ for $A=\Mat_N(A)$ and the idempotent $c= {\bf 1}_I$, see \eqref{defjp-i}.
\sm

   \item\label{defjp-iii} Let $a \mapsto a^J$ be an involution of the
       associative $k$-algebra $A$. Then $\rmH(A,J) = \{a \in A: a^J = a\}$
       is closed under the Jordan pair product, whence $\HH(A,J) = (\rmH(A,
       J), \, \rmH(A, J))$ is a Jordan pair.

   More generally, extend $J$ to an
       involution of the associative $k$-algebra $\Mat_I(A)$, $|I|\ge 2$,
defined by $(x_{ij})^J=(x_{ji}^J)$ and again denoted by
       $J$.   Then the {\em hermitian matrix pair}
   \[   \HH_I(A,J) = \big( \rmH(\Mat_I(A), J), \, \rmH(\Mat_I(A), J)\big)
\]
is a special Jordan pair. In particular, taking $A=k$, $J=\Id_k$ and $I=\{1, \ldots , n \}$ we get  the {\em symmetric matrix pair\/} $\HH_n(k) = (\rmH_n(k), \rmH_n(k))$.
\sm

\item \label{defjp-iv} Let $\Alt_I(k)$ be the alternating $I\times I$-matrices over $k$, where $x=(x_{ij})$ is called alternating if $x_{ii} = 0 = x_{ij} + x_{ji}$ for $i,j\in I$. Then the {\em alternating matrix pair\/} $\bbA_I(k) =     (\Alt_I(k), \Alt_I(k))$ is a subpair of
$\MM_I(k)$, whence a special Jordan pair. \sm

   \item \label{defjp-v} Let $M$ be a $k$-module and let $q\co M \to k$ be a quadratic form with polar   form $b$, defined by $b(x,y) = q(x+y) - q(x) - q(y)$. Then   $\JJ(M,q)=(M,M)$ is a Jordan pair with quadratic operators $Q_xy =   b(x,y)x - q(x)y$. \sm

\item Let $J$ be a unital quadratic Jordan algebra with quadratic operators $U_x$,     $x\in J$ (\cite{jake:tata,jake:ark}).  Then $(J,J)$ is a Jordan pair with quadratic maps $Q_x = U_x$. For example, if $k$ is a field, the rectangular matrix pair $\MM_{pp}(k)$ is  of this form, but $\MM_{pq}(k)$ for $p\ne q$ is not. Thus, there are  ``more'' Jordan pairs than Jordan algebras.

    Let $\scC$ be an octonion $k$-algebra, see for example \cite{SV} in case $k$ is a  field or \cite{lopera} in general,
    and let $J=\rmH_3(\scC)$ be the exceptional Jordan algebra
    of $3\times 3$ matrices over $\scC$ which are hermitian with respect to
    the standard involution of $\scC$. Then $(J,J)$ is a Jordan pair,  which
    is not special in the sense of \eqref{defjp-ii}.
 Such Jordan pairs are called {\em exceptional\/}.
\end{inparaenum}

\subsection{Root graded Jordan pairs.} \label{rgjp} Let us first recast the
Peirce decomposition \ref{defjp}\eqref{defjp-idem} of an idempotent $e$ in a
Jordan pair $V$ from the point of view of a grading.

We use the $3$-graded root system $\rmc_I\her$ of \ref{3gra}\eqref{3gra-b} with $I=\{0,1\}$. Its $1$-part is $R_1 = \{ \eps_i + \eps_j : i,j\in \{0,1\}\} = \{2 \eps_1, \eps_1 + \eps_0, 2\eps_0\}$. Putting
\[ V\si_\al = V\si_{i+j}(e) \qquad (\alp = \eps_i + \eps_j \in R_1)
\]
we have the decomposition $V\si= \bigoplus_{\alp \in R_1} \,
V\si_\al$ which satisfies
\begin{align}
\tag{RG1} \label{rg1}  Q(V_\al\si)V_\beta\msi &\subset V\si_{2\alp-\beta}, \qquad \qquad  \{V\si_\alp\, V\msi_\beta\,
    V\si_\ga\} \subset V\si_{\alp-\beta + \gam}, \\
\tag{RG2} \label{rg2}
\{V_\al\si \, V_\beta\msi\, V\si \} &=  0 \qquad \qquad \qquad \quad \hbox{if $\alp\perp\beta$}.
\end{align}
Here $2\alp-\beta$ and $\alp-\beta+ \gam$ in \eqref{rg1} are calculated
in  $X= \RR\cdot \eps_0 \oplus \RR \cdot \eps_1 \cong \RR^2$, and $V\si_{2\alp-\beta} = 0$ if $2\alp-\beta \notin R_1$ or $V\si_{\alp -\beta + \gam}=0$ if $\alp-\beta + \gam \not\in R_1$. We see that, apart from the actual definition of the Peirce spaces, the rules
governing the Peirce decomposition can be completely described in terms of
$R_1$. The following generalisation is then natural. \sm

Given a $3$-graded root system $(R,R_1)$ and a Jordan pair $V$, an {\em
$(R,R_1)$-grading of $V$\/} is a decomposition $V\si= \textstyle
\bigoplus_{\alp \in R_1} \, V\si_\al$, $\sig=\pm$, satisfying \eqref{rg1} and
\eqref{rg2}. We will use $\R=(V_\al)_{\alp \in R_1}$ to denote such a
grading. A {\em root graded Jordan pair\/} is a Jordan pair equipped with an
$(R,R_1)$-grading for some $3$-graded root system. In view of \eqref{3gra2}
we can make the first inclusion in \eqref{rg1} more precise:
\begin{equation} \label{rgjp4}
\begin{split}
&Q(V\si_\al) V\msi_\beta = 0 \quad \hbox{\em unless $\alp \lra \beta$, in which case} \\
&\quad \hbox{\em $2\alp - \beta \in R_1$ and $Q(V\si_\al) V\msi_\beta\subset
  V\si_{2\alp - \beta}$.}
\end{split} \end{equation}
\sm

We call $\R$ an {\em idempotent root grading\/} if there exists
 a subset $\De \subset R_1$ and a family $(e_\al)_{\alp \in \Del}$ of non-zero idempotents $e_\al \in V_\al$ such that the  $V_\be$ are given by
\begin{equation}\label{rgjp3}
V_\beta  = \bigcap_{\alp \in \Del}\, V_{\lan \beta, \alp\ch\ran}(e_\al)
\end{equation}
Observe that \eqref{rgjp3} makes sense since $\lan \alp,\beta\ch\ran \in
\{0,1,2\}$ by \eqref{3gra1}.  Neither the idempotents nor the subset $\Del\subset R_1$ are uniquely determined by an idempotent root grading, see for
example \eqref{rgjp-iii} below.

To avoid some technicalities, we will often
assume that $\R$ is a {\em fully idempotent root grading\/}, i.e.,
$\R$ is idempotent with respect to a family of idempotents with $\Del = R_1$. In the terminology of \cite{n:grids} this means that $V$ is covered by
the cog $(e_\al)_{\alp \in R_1}$. \ms

{\bf Examples.} \begin{inparaenum}[(i)] \item Let $R=\rmaf_1= \{\alp, -
\alp\}$ equipped with the $3$-grading defined by $R_1 = \{\alp\}$. An
$(R,R_1)$-graded Jordan pair is simply a Jordan pair $V=(V^+, V^-)$ for which
$V\si = V_\al\si$. This root grading is idempotent if and only if $V\cong (J,J)$
where $J$ is a unital Jordan algebra. To see sufficiency in case $J$ is a
Jordan algebra with identity element $1_J$, one uses $e_\al = (1_J, 1_J)$
and observes $(J, J) = V_2(e_\al)$. \sm

\item Let $(R,R_1) = \rmc_2\her$. We have seen above that the Peirce
    decomposition of an idempotent $e\in V$ can be viewed as a
    $\rmc_2\her$-grading, which is idempotent with respect to $e=e_\al$,
    $\alp = 2\eps_1$. Thus here $\Del = \{\alp\}$. \sm

\item \label{rgjp-iii} Let $(R, R_1) = \rma_N^I$ be the $3$-graded root
    system of \ref{3gra}\eqref{3gra-a}. Put $J = N\setminus I$. An
    $\rma_I^N$-grading of a Jordan pair $V$ is a decomposition
    \[  V=\textstyle \bigoplus_{(i j) \in I \times J} \, V_{(ij)}\]
     such that for all $(ij)$ and $(lm) \in I\times J$ and $\sigma =\pm$ we
     have, defining $V_{(ij)} = V_{\eps_i-\eps_j}$ for $\eps_i-\eps_j \in
     R_1$,
\begin{align*}
Q(V\si_{(ij)})V\msi_{(ij)}& \subset V\si_{(ij)}, &
\{V\si_{(ij)} \, V\msi_{(ij)} \, V\si_{(im)}\}&  \subset V\si_{(im)},
\\
\{ V\si_{(ij)} \, V\msi_{(ij)} \, V\si_{(lj)}\} & \subset
V\si_{(lj)}, & \{V\si_{(ij)} \, V\msi_{(lj)} \, V\si_{(lm)}\}
& \subset V\si_{(im)},
\end{align*}
and all other types of products vanish. \sm

An example of an $\rma_I^N$-graded Jordan pair $V$ is the rectangular matrix
pair $\MM_{IJ}(A)$ of an associative unital $k$-algebra $A\ne 0$, see
\ref{exjp}\eqref{defjp-iir}, with respect to the subpairs $V_{(ij)} =
(AE_{ij}, AE_{ji})$. This $\rma_N^I$-grading of $\MM_{IJ}(A)$ is fully
idempotent with respect to the family $(e_\al)_{\alp \in R_1}$, $e_\al = (
a_{ij}E_{ij}, a\me_{ij}E_{ji})$, where $\alp = \eps_i - \eps_j$ and $a_{ij} \in
A\ti$.
It is also idempotent with respect to the following
smaller family:  fix $i_0\in I$ and $j_0 \in J$ and consider $(e_\al)_{\alp \in \Del}$ where $\Del = \{\eps_i - \eps_{j_0} :
i\in I\} \cup \{ \eps_{i_0} - \eps_j : j\in J\}$.
\sm

Let $\fra, \frb \subset A$ be $k$-submodules with $\fra \frb \fra \subset
\fra$ and $\frb \fra \frb \subset \frb$.  Then $S = (\Mat_{IJ}(\fra),
\Mat_{JI}(\frb))$ is a Jordan subpair of $\MM_{IJ}(A)$. It inherits the
$\rma_N^I$-grading from $\MM_{IJ}(A)$ by putting $S_{(ij)} = (\fra E_{ij},
\frb E_{ji})$. This $\rma_N^I$-grading of $S$ is in general not idempotent,
e.g., if $\fra$ is a nil ideal. \ms

\item The hermitian matrix pair $V=\HH_I(A,J)$ of \ref{exjp}\eqref{defjp-iii}
    has an idempotent $\rmc_I\her$-grading.
Indeed, define
\[  h_{ij}(a) = \begin{cases} a E_{ij} + a^JE_{ji} &\hbox{if $a \in A$ and $i \ne j$}, \\
                               aE_{ii} &\hbox{if $i=j$ and $a\in \rmH(A,J)$.} \end{cases}
\]
Then $V=\bigoplus_{\alp \in R_1} V_\al$ with
\[ V_\al = V_{\eps_i + \eps_j} =
   \begin{cases}\big( h_{ij}(A), h_{ij}(A)\big), & \hbox{if $i \ne j$} \\
                      \big( h_{ii}(\rmH(A,J)), \, h_{ii}(\rmH(A,J))\big),
                          &\hbox{if $i=j$}. \end{cases}
\]
is a $\rmc_I\her$-grading of $V$. It is fully idempotent, for example with respect
to the family $(e_\al)_{\alp \in R_1}$ for which $e_\al = (h_{ij}(1),
h_{ij}(1))$, $\alp = \eps_i + \eps_j$. \ms

\item The remaining examples in \ref{exjp} all have idempotent root gradings.
The alternating matrix pair $\bbA_I(k)$  of \ref{exjp}\eqref{defjp-iv}
    has an idempotent $\rmd_I\alt$-grading (\cite[23.24]{LN}).
 The Jordan pair $\JJ(M,q)$ associated with a quadratic form $q$ in
\ref{exjp}\eqref{defjp-iv} has an idempotent $\rmb_I\qf$ if $q$ contains a hyperbolic plane, or even a $\rmd_I\qf$-grading if $q$ is hyperbolic (\cite[23.25]{LN}). If $\scC$ is a split octonion algebra  in the sense of \cite{SV} or \cite{lopera} the exceptional Jordan pair
    $(\rmH_3(\scC), \rmH_3(\scC))$ has an idempotent root grading with $R$ of
    type $\rme_7$ (\cite[III, \S3]{n:grids}).
\end{inparaenum}

\subsection{The Steinberg group $\St(V,\R)$.}\label{stvr} Let $(R,R_1)$ be a
$3$-graded root system and let $V$ be a Jordan pair with a root grading
$\R=(V_\al)_{\alp \in R_1}$, not necessarily idempotent. The {\em Steinberg
group $\St(V,\R)$\/} is the group with the following presentation: \sm
\begin{itemize}
  \item[$\bu$] The generators are $\xp(u)$, $u\in V^+$, and $\xm(v)$, $v\in V^-$.
\sm

\noindent To formulate the relations, we first introduce, for $\alp \ne \beta \in
R_1$ and $(u,v) \in V_\al^+\times V_\be^-$, the element $\b(u,v)$  in the free
group with the above generators
 by the equation
\begin{equation}\label{stvr1}
    \xp(u) \; \xm(v) = \xm(v + Q_v u)\; \b(u,v) \; \xp(u + Q_u v)
\end{equation}

\item Then the relations are
 \begin{align}
   \tag{St1}\label{sj0} &\xs (u+ u') = \xs(u) \, \xs(u') \qquad \qquad \; \hbox{for $u, u'\in V\si$}, \\
  \tag{St2}\label{sj1}  &\dd {\xp(u)} {\xm(v)} = 1 \qquad \qquad \qquad  \quad
              \hbox{for $(u,v) \in V^+_\al \times V^-_\be$, $\alp \perp\beta$}, \\
  \tag{St3}\label{sj2}
   & \begin{cases}
     \dd {\b(u,v)} {\xp(z)}  = \xp( - \{u\, v\, z\} + Q_u Q_v z), \\
         \dd {\b(u,v)\me} {\xm(y)} = \xm( - \{v\, u\, y\} + Q_v Q_u y)
      \end{cases} \\ \nonumber
      & \qquad \hbox{for all $(u,v) \in V_\al^+ \times V_\be^-$ with $\alp
       \ne \beta$ and all $(z,y) \in V$.}
 \end{align}\end{itemize}
\ms

{\bf Remarks.} \begin{inparaenum}[(a)]
\item Let us have a closer look at the element $\b(u,v)$ in \eqref{stvr1}.
    By \eqref{3gra1} the possibilities for $\alp,\beta$ are $\alp \perp
    \beta$, $\alp \edge \beta$, $\alp \lra \beta$ and $\alp \lla \beta$ and
    by \eqref{rgjp4}, $Q_v u = 0$ unless $\beta \lra \alp$, and $Q_uv = 0$
    unless $\alp \lra \beta$. Therefore, by \eqref{sj1},
\begin{equation} \label{stvr3}
 \b(u,v) = \begin{cases} 1, & \text{if } \alp \perp \beta, \\
                   \dd {\xm(-v)} {\xp(u)}, & \text{if } \alp \edge \beta, \\
                   \xm(-Q_vu)\;\dd {\xm(-v)} {\xp(u)}, & \text{if }
                                           \alp \lra \beta, \\
                  \dd {\xm(-v)} {\xp(u)}\; \xp(-Q_uv), & \text{if } \alp \lla \beta.
 \end{cases} \end{equation}
In general, the factors $\xm(-Q_vu)$ and $\xp(-Q_uv)$ in the last two cases are $\ne 1$.
\sm

The reader may be puzzled by the definition of $\b(u,v)$: why not take
$``\b(u,v) = \dbl \xm(-v)\, , \allowbreak \xp(u)\dbr"$? We will give a
justification for this in \ref{thmB}. \ms

\item \label{stvr-b} We claim that \eqref{sj2} follows from \eqref{sj1}
in case $\alp \perp\beta$. Indeed, the left hand sides of the two equations
    \eqref{sj2} are $1$ because $\b(u,v)=1$, but also the right hand sides
    are $1$, since, say for $\sig=+$, we have  $\{u\, v\, z\} = 0$ by
    \eqref{rg2} and $Q_u Q_v z=0$ by \eqref{rgjp5} below, :
\begin{equation} \label{rgjp5}
\hbox{$Q(V_\al\si)Q(V_\be\msi)V_\ga\si \ne 0 \implies \alp = \beta$ or $
\alp\lla \beta = \gam$ or $\alp \edge \beta \lla \gam \perp \alp$,}
\end{equation}
which can be shown by repeated application of \eqref{3gra2}. \sm

\item \label{stvr-c} Let $(V,\R)= (\MM_{IJ}(A), \R)$. Comparing the
    definition of $\St(\MM_{IJ}(A), \R)$ with the one in \ref{news}, it is
    clear that the first two relations coincide: \eqref{ej1} = \eqref{sj0}
    and \eqref{ej2} = \eqref{sj1}. We claim that the relations \eqref{ej3}
    coincide with the two relations in \eqref{sj2}. Indeed, since we do not
    have a relation $\alp \lla \beta$ in $\rma_N^I$, it follows from
    \eqref{stvr-b} and the assumption $\alp \ne \beta$ in \eqref{sj2} that
    we only need to consider the case $\alp \edge \beta$. Then the map
$V_\be^- \to    \St(V,\R)$, $v \mapsto \dd {\xp(u)}{\xm(v)}$ is homomorphism of groups,     whence, by \eqref{stvr3},
    \[ \b(u,v) = \dd {\xm(-v)}{\xp(u)} = \dd {\xp(u)}{\xm(-v)} \me =
      \dd {\xp(u)} {\xm(v)}.\] Thus  \eqref{ej3} = \eqref{sj2} for
       $\sig=+$ because $Q_uQ_x z = 0$ by \eqref{rgjp5}. The equality of
       the two relations for $\sig = -$ can be established in the
       same way.
\end{inparaenum}
\lv{
Proof of \eqref{stvr-c} for $\sig = -$: By \eqref{stvr3} we have $\b(u,v)
\me = \dd {\xm(-v)}{\xp(u)}\me = \dd{\xm(v)}{\xp(u)}$, whence \eqref{sj2}
for $\sig = -$ becomes $\dd {\dd{\xm(v)} {\xp(u)}} {\xm(y)}  = \xm(- \{ v
\, u \, y\})$, which is \eqref{ej3} for $\sig=-$ after a change of
notation: $\dd{\dd{\xm(u)} {\xp(v)}} {\xm(z)}  = \xm(- \{ u \, v \, z\})$
}
\ms

We can now state the generalization of part \eqref{kms-a} of the
Kervaire-Milnor-Steinberg Theorem~\ref{kms} in the setting of this section.

\subsection{Theorem A} \label{thmA}
{\em Let $(R, R_1)$ be a $3$-graded root system whose irreducible
components all have rank $\ge 5$, and let $V$ be a Jordan pair with a
fully idempotent root grading $\R$.
Then the Steinberg group $\St(V,\R)$ is centrally closed.} \ms

This theorem is one of the main results of \cite{LN}; its proof takes up
all of Chapter VI of \cite{LN}. It is shown there in greater generality.
First, it also true when $R$ has connected components of rank $4$, but not
of type $\rmd_4$. Moreover, it is not necessary to assume that the
root grading $\R$ is fully idempotent.
For the irreducible components of type $\rmb_I$ or $\rmc_I$, $|I|\ge 5$,
one only needs idempotents $e_\al \in V_\al$ in case  $\alp$ is a long root in
type $\rmb$ and $\alp$ a short root in type $\rmc$. This generality allows
us to consider groups defined in terms of hermitian matrices associated with form rings in the sense of \cite{bak}.

\subsection{Highlights of our approach.} \label{high}
The novel aspect of our approach is the consistent use of the theory of
$3$-graded root systems and Jordan pairs, which introduces new
methods in the theory of elementary and Steinberg groups. For example,
instead of first dealing with the case of finite root systems and then
taking a limit to get the stable (= infinite rank) case, we deal with both
cases at the same time. Moreover, our approach avoids having to deal with
concrete matrix realizations of the groups in question, as is traditionally
done, see e.g.\ \cite{bak} or \cite{hahn}. It allows for a concise
description of the defining relations, independent of the types of root
systems involved. Finally, as the discussion of the linear Steinberg group
in \ref{rw} -- \ref{mth} shows, we need fewer relations than in
previous work, for example no relations involving two roots in $R_0$.

With the exception of groups defined in terms of root systems of type
$\rme_8$, $\rmf_4$ and $\rmg_2$, which are not amenable to a Jordan approach, cf.\ \ref{3gra}\eqref{3gra-c},  our Theorem~\ref{thmA} covers all types of
Steinberg groups considered before. In addition, it also presents some new
types, e.g., for elementary orthogonal groups. A detailed comparison of our
Theorem~\ref{thmA} with previously known results is given in
\cite[27.11]{LN}. \ms

At this point it is natural to ask if there also exists a generalization of
part \eqref{kms-b} of Theorem~\ref{kms}, stating that the map $\wp \co
\St(A) \to \EL(A)$ is a universal central extension. While the group
$\St(V,\R)$ gives a satisfactory replacement for the linear Steinberg group
$\St(A)$, recasting the elementary linear group $\EL(A)$ in the framework
of Jordan pairs is limited to special Jordan pairs in the sense of
\ref{exjp}\eqref{defjp-ii}. While this can be done, see \cite{stab}, we
will instead replace the elementary group $\EL(A)$ by the projective
elementary group $\PE(V)$, see \ref{pev}, that can be defined for any
Jordan pair $V$. From the point of view of universal central extensions,
this is harmless since, as we will see in \ref{pvn}, the group $\PE(V)$ is
isomorphic to the central quotient $\PE(A) = \EL(A)/\scZ(\EL(A))$ and
universal central extensions of a group and its central quotients are
essentially the same by \ref{centex}\eqref{centex-d}.


\subsection{The Tits-Kantor-Koecher algebra and the projective elementary group
of a Jordan pair.} \label{pev}
Let $V$ be a Jordan pair, defined over a commutative ring $k$ of scalars.
It is fundamental (and well-known) that $V$ gives rise to a $\ZZ$-graded
Lie $k$-algebra
\begin{equation}  \tkk(V)=\tkk(V)_{1} \oplus \tkk(V)_0
\oplus
\tkk(V)_{-1}, \label{intro2}
\end{equation}
introduced at about the same time by Tits, Kantor and Koecher in
\cite{Ti0,tits:alt,Ka1,kan:trans,kan:cert,koe:imbedI,Ko3} and called the
{\em Tits-Kantor-Koecher algebra of $V$\/}. Various versions of $\tkk(V)$
exist, but all agree that $\big( \tkk(V)_1, \tkk(V)_{-1}\big) = (V^+, V^-)$
as $k$-modules. For our purposes, the most appropriate choice for
$\tkk(V)_0$ is
\begin{align}\label{pev2}
 \tkk(V)_0 &= k\zeta + \Span_k \{ \del(x,y): (x,y) \in V\},
 \end{align}
where $\zeta =(\Id_{V^+}, \Id_{V^-})$ and  $\del (x,y) = (D(x,y), - D(y,x))\in \End(V^+) \times \End(V^-)$, defined by $D(x,y)z=\{x\, y\, z\}$. We let $\gl(V\si)$  be the Lie algebra defined by  $\End(V\si)$ with the commutator as the Lie product. By definition, the Lie product of $\tkk(V)$ is determined by the conditions that it be alternating, that $\tkk(V)_0$ be a subalgebra of the
Lie algebra $\gl(V^+) \times \gl(V^-)$ and that
\[
   [V\si, V\si] = 0, \qquad [D,z] = D_\sig (z), \qquad [x,y ] = - \del(x,y)
\]
for $D=(D_+, D_-) \in \tkk(V)_0$, $z\in V\si$ and $(x,y) \in V$.
It follows from the identity (JP15) in
\cite{jp},
\[ [ D(x,y), \, D(u,v)] = D(\{x\, y\, u\}, \, v) -  D(u, \, \{y\, x\, v\}).
\]
that $\tkk(V)_0$ is indeed a subalgebra. As a $k$-Lie
algebra, $\tkk(V)$ is generated by $\zeta$, $V^+$ and $V^-$, and it
has trivial centre.
%
\sm

For a Jordan pair $V$ with a fully idempotent root grading $\R$ a
description of the derived algebra $[\tkk(V), \tkk(V)]$ is given in
\cite{n:3g}. The Tits-Kantor-Koecher algebra of a special Jordan pair is
described in \cite[\S2]{stab}. We will work out $\tkk(V)$ for a rectangular matrix pair in \ref{rect-pvex}. \sm

An automorphism $f$ of $V$ gives rise to an automorphism $\tkk(f)$ of
$\tkk$, defined by
\[ x \oplus D  \oplus y \quad \mapsto \quad
          f_+(x) \oplus (f\circ D\circ f\me)  \oplus f_-(y).\] The map $f
\mapsto \tkk(f)$ is an embedding of the automorphism group $\Aut(V)$ of $V$
into the automorphism group of $\tkk(V)$. \ms

Any $(x,y)\in V$ gives rise to automorphisms $\exp_+(x)$ and $\exp_-(y)$ of
$\tkk(V)$, defined in terms of the decomposition \eqref{intro2} by the
formal $3\times 3$-matrices
\begin{equation} \label{pev3}
\exp_+(x)=\begin{pmatrix} 1 & \ad x & Q_x
\\ 0 & 1 & \ad x \\ 0&0&1 \end{pmatrix} , \qquad \exp_-(y)=\begin{pmatrix}
1&0&0\\ \ad y&1&0\\ Q_y&\ad y&1 \end{pmatrix}.
\end{equation} The map
$\exp_\sig$, $\sig = \pm$, is an  injective homomorphism from the abelian
group $(V\si, +)$ to the automorphism group
of $\tkk(V)$, whose image is denoted $U\si$. The {\it projective elementary
group of $V$\/} is the subgroup $\PE(V)$ of $\Aut\big(\tkk(V)\big)$
generated by $U^+ \cup U^-$, introduced in  \cite{stab} and studied further
in \cite[\S7, \S8]{LN}.
\ms

We have now explained all the concepts used in the generalization of part
\eqref{kms-b} of the Kervaire-Milnor-Steinberg Theorem~\ref{kms}.

\subsection{Theorem B}\label{thmB}{\em
Let $(R, R_1)$ be a $3$-graded root system and let $V$ be a Jordan pair
with a root grading $\R=(V_\al)_{\alp \in R_1}$. \ms

\begin{inparaenum}[\rm (a)]
  \item \label{thmB-a} There exists a group homomorphism $\pi \co \St(V,\R)
      \to \PE(V)$, uniquely determined by
      \[ \pi\big( \xs(u) \big) = \exp_\sig (u), \qquad (u\in V\si). \]

\item \label{thmB-b} If all irreducible components of $R$ have infinite
    rank and $\R$ is fully idempotent with respect to a family
    $(e_\al)_{\alp \in R_1}$, the homomorphism $\pi$ is a universal central
    extension.
\end{inparaenum}}
\ms

Theorem B is established in \cite{LN}. Part~\eqref{thmB-a} follows from
\cite[Cor. 21.12]{LN}. By Fact~\ref{centex}\eqref{centex-c} and
Theorem~\ref{thmA}, the proof of \eqref{thmB-b} boils down to showing that
$\Ker \pi$ is central, which we do in \cite[Cor.~27.6]{LN}. As for
Theorem~\ref{thmA}, it is not necessary to assume that $\R$ is fully idempotent.
\sm

In the setting of \eqref{thmB-a} let $(u,v) \in V_\al^+ \times V_\be^-$
with $\alp\ne \beta$ and let $\b(u,v)$ be the element of $\St(V,\R)$
defined in \eqref{stvr1}. Then $\pi(\b(u,v)) = \tkk(f)$ for some $f\in
\Aut(V)$ (for the experts: $f$ is the inner automorphism $(B(u,v),
B(v,u)\me)$ of \cite[3.9]{jp}). That $\pi(\b(u,v)) \in
\tkk\big(\Aut(V)\big) \subset \Aut(\tkk(V)) $ is the motivation for the perhaps surprising definition of $\b(u,v)$.
%
\sm

We finish this section by describing $\tkk(V)$ and $\PE(V)$ for
$V=\MM_{IJ}(A)$.

\subsection{The Tits-Kantor-Koecher algebra of a rectangular matrix
pair.}  \label{rect-pvex} Let $V=\MM_{IJ}(A)= \big(\Mat_{IJ}(A),
\Mat_{JI}(A)\big)$ be the rectangular matrix pair of
\ref{exjp}\eqref{defjp-iir}. In this subsection we present a model for the
Tits-Kantor-Koecher algebra $\tkk = \tkk(V)$ in terms of elementary
matrices which will be used in \ref{pvn} to link the elementary group of $V$ and the abstractly defined group $\PE(V)$. \sm

Let ${\bf 1}_I = \diag(1_A, \ldots )$ be the diagonal matrix of size
$I\times I$, define ${\bf 1}_J$ analogously and let $\frA$ be the unital
associative $k$-algebra
\[
   \frA = \frA(V) = \begin{pmatrix}
     k \, {\bf 1}_I + \Mat_I(A) & \Mat_{IJ}(A) \\ \Mat_{JI}(A) & k \,{\bf 1}_J +
       \Mat_{J}(A)  \end{pmatrix}
   =\begin{pmatrix}
      \Mat_I(A)_{\rm ex} & \Mat_{IJ}(A) \\ \Mat_{JI}(A) & \Mat_{J}(A)_{\rm ex}
       \end{pmatrix}
\]
whose  operations are given by matrix addition and matrix multiplication.
In particular,
\[
   e_1 = \begin{pmatrix} {\bf 1}_I & 0 \\ 0 & 0 \end{pmatrix} \quad \qquad \hbox{and} \quad  \qquad
   e_2 = \begin{pmatrix} 0 & 0 \\ 0 & {\bf 1}_J \end{pmatrix}
\]
are orthogonal idempotents of $\frA$.
We consider  $\frA$ rather than its subalgebra $\Mat_N(A)_{\rm ex}$, $N= I \dotcup J$, since this will
allow us to model the element $\zeta$ of \eqref{pev2}.

The Peirce decomposition of $\frA$ with respect to the idempotent $e_1$ is
\begin{align*}
 \frA_{11} &= \begin{pmatrix} \Mat_I(A)_{\rm ex} & 0 \\ 0 & 0 \end{pmatrix}, &
 \frA_{10} &= \begin{pmatrix} 0 & \Mat_{IJ}(A) \\ 0 & 0 \end{pmatrix}, \\
 \frA_{01} &= \begin{pmatrix} 0 & 0 \\ \Mat_{JI}(A)  & 0 \end{pmatrix}, &
\frA_{00} & \begin{pmatrix} 0 & 0 \\ 0 & \Mat_J(A)_{\rm ex} \end{pmatrix}.
\end{align*}
Let $\frA^{(-)}$ be the Lie algebra associated with $\frA$. Thus,
$\frA^{(-)}$ is defined on the $k$-module underlying $\frA$ and its Lie
algebra product is $[x,y] = xy-yx$ for $x,y\in \frA$. The Lie algebra
$\frA^{(-)}$ is $\ZZ$-graded, $\frA^{(-)} = \bigoplus_{n\in \ZZ}
\frA^{(-)}_n$ with
\[
    \frA^{(-)}_1 = \frA_{10}, \quad \frA^{(-)}_0 = \frA_{11}
            \oplus \frA_{00}, \quad \frA^{(-)}_{-1} = \frA_{01}
\]
and $\frA^{(-)}_n = 0$ for $n\notin \{1, 0 , -1\}$.  We define $\fre=
\fre(V)$ as the subalgebra of $\frA^-$ generated by $e_1$, $e_2$ and
\[ \fre_1 = \begin{pmatrix} 0 & \Mat_{IJ}(A) \\ 0 & 0 \end{pmatrix} = \frA^{(-)}_1
\quad \hbox{and} \quad \fre_{-1}  = \begin{pmatrix} 0 & 0 \\ \Mat_{JI}(A) & 0
   \end{pmatrix} = \frA^{(-)}_{-1}.
\]
Put $\fre_0 = k \, e_1 +  k \, e_2 + [\fre_1, \, \fre_{-1}]$ and
$\fre_i = 0$ for $i \notin \{-1, 0,1\}$.  Then $\fre = \bigoplus_{i \in \ZZ}
\fre_i$ is a $\ZZ$-graded Lie algebra.

We now relate \ignore the Lie algebra \endignore $\fre$ to the Tits-Kantor-Koecher algebra
$\tkk = \tkk(V)$ of $V$. First, for $\sfa = \big( \begin{smallmatrix} a & 0 \\
0 &d \end{smallmatrix}\big) \in \fre_0$  define $\Del(\sfa)  = (\Del(\sfa)_+, \,
\Del(\sfa)_-) \in \End_k(V^+) \times \End_k(V^-)$ by
\[
   \Del(\sfa )_+ (u) = au - ud, \qquad \Del(\sfa)_- (v) = dv-va,
\]
so that
\begin{equation} \label{rect-pvex-7}
 \Big[ \sfa, \pma 0bc0 \Big] =
      \pma 0 {au-ud}{dv-va}0 = \pma 0 {\Del_+(\sfa)(u)} {\Del_-(\sfa)(v)} 0.
\end{equation}
We claim: {\em the map
\[
   \Psi \co \fre \to \tkk, \qquad \begin{pmatrix} a & b \\ c & d \end{pmatrix}
   \mapsto b \oplus  \Del(a,d) \oplus (-c)
\]
is a surjective Lie algebra homomorphism  whose kernel is $\frz(e)$, the
centre of $\fre$, and thus induces an isomorphism
\begin{equation} \label{rect-pvex1}
  \fre / \frz(\fre) \cong \tkk
\end{equation}
of Lie algebras\/} (\cite[2.6]{stab}, \cite[7.2]{LN}). Indeed, $\Psi$  is
surjective since $\Del(e_1) = \zeta = - \Del(e_2)$ and for $(x,y) \in V$
\begin{equation} \label{rect-pvex-8}
 \Del\big( \big[ \big(\begin{smallmatrix} 0 & x \\ 0 & 0 \end{smallmatrix}\big),
\, \big( \begin{smallmatrix} 0 & 0 \\ y & 0 \end{smallmatrix}\big) \big] \big)
 = \Del \big( \begin{smallmatrix} xy & 0 \\ 0 & -yx \end{smallmatrix} \big)
 = \del(x,y)
\end{equation}
by \eqref{defjo-ii1}.  To see that $\Ker \Psi = \frz(\fre)$, observe for
$\sfm = \sma abcd \in \frA$ that
\begin{equation}\label{rect-pvex-5}
[\sfm , e_1] = 0 \iff b=0=c \iff [\sfm, e_2]=0,
\end{equation}
whence by \eqref{rect-pvex-7}, 
\begin{align*}
  \sfm \in \Ker \Psi \quad &\iff \quad b=0=c,\; \De(a,d) = 0,\; \sma a00d \in \fre_0, \\
 &\iff \quad [\sfm, e_1]=0 = [\sfm,e_2],\; [\sfm, \,\fre_1]=
    0=[\sfm, \, \fre_{-1}], \; \sfm \in \fre, \\
& \iff \quad \sfm \in \frz(\fre)
\end{align*}
because $e_1, e_2, \fre_1$ and $\fre_{-1}$ generate $\fre$ as Lie algebra.
Finally, since both $\fre$ and $\tkk$ are $\ZZ$-graded and $\Psi$ preserves
this grading, $\Psi$ is a Lie algebra homomorphism as soon as $\Psi$
preserves products of type $[\sfx_i, \sfy_j]$ for $(i, j)= (0, \pm 1), (1,
-1)$ and $(0,0)$. For $(0, \pm 1)$ and $(1,-1)$ this follows from
\eqref{rect-pvex-7} and \eqref{rect-pvex-8} respectively; the case $(0,0)$
is left to the reader. \ms

In the remainder of this subsection we will give a more precise description
of $\fre$ and its centre, see \eqref{rect-pvex-0} and \eqref{rect-pvex-9}.
Let $[A,A]= \Span \{ab-ba : a,b\in A\}$, the derived algebra of the Lie
algebra $A^-$, and let
$$
\lsl_N(A) := \{ x=(x_{k l}) \in \Mat_N(A) \co  \textstyle \sum_{n\in N} x_{nn} \in [A,A]\}.
$$
From $[aE_{kl}, b E_{rs}] = ab \del_{lr}
E_{ks} - ba \del_{ks} E_{rl}$ one then gets
\begin{equation} \label{rect-pvex-0}
 \begin{split} \fre_1 \oplus [\fre_1, \fre_{-1}] \oplus \fre_{-1} &= \lsl_N(A),
                 \qquad \hbox{whence} \\
           k \, e_1 + k \, e_2 + \lsl_N(A) & = \fre. \end{split}
\end{equation}
\lv{
Description of $\fre$: $[a E_{ij}, bE_{jl}] = ab E_{il} - \del_{il} ba
E_{ji}$ and $[aE_{ij}, bE_{mi}] = \del_{jm} ab E_{ii} - ba E_{mj}$ whence
all off-diagonal elements of $\lsl_N(A)$ lie in $\fre$, and so do all
$abE_{ii} - baE_{jj}$, where now $i,j\in N$, $i\ne j$. One then concludes
because the latter elements span the diagonal of $\lsl_N(A)$: take $b=1$ to
get $aE_{ii} - a E_{jj}$, whence $(ab E_{ii} - ba E_{jj}) - (rba E_{ii} -
ba E_{jj}) = [a,b] E_{ii} \in \fre$.
}
The description of the centre $\frz(\fre)$ depends on the cardinality of
$N$ because
\[ 
\frA^{(-)}_0 \cap A {\bf 1}_N= \begin{cases} A \,{\bf 1}_N & \text{if }
                                             |N|< \infty, \\
                  k \,  {\bf 1}_N & \text{if } |N|=\infty. \end{cases}
\]
\lv{
We have $\frA_{11} \oplus \frA_{00} = (k e_1 + \Mat_I(A)) \oplus (k e_2 +
\Mat_J(A))$. If $|N|< \infty$ then both $I$ and $J$ are finite, so $e_1 \in
\Mat_I(A)$ and $e_2 \in \Mat_J(A)$ and $a{\bf 1}_N = a e_1 + a e_2 \in
\frA_{11} \oplus \frA_{00}$. If $N$ is infinite, then $I$ or $J$ is
infinite, say $I$ is infinite. An element $a {\bf 1}_N$ lying in $\frA_{11}
\oplus \frA_{00}$ has the form $a{\bf 1}_N = (s_1 e_1 + x') \oplus (s_2 e_2
+ x'')$ with $s_i \in k$, $x'\in \Mat_I(A)$ and $x'' \in \Mat_J(A)$, in
particular $(a - s_1){\bf 1}_I = x'\in \Mat_I(A)$ has only finitely many
non-zero coefficients, forcing $a=s_1\in k$. Conversely, $k {\bf 1}_N = k
(e_1 + e_2)\in \frA_0^{(-)}$ is always true.}
Denoting by $\rmZ(A)= \{ z\in A : [z,A]=0\}$ the centre of $A$ (= centre of
$A^{(-)}$), a straightforward calculation shows
\begin{equation}  \label{rect-pvex-6}
 \big\{ x\in \frA^{(-)}_0 : [x, \fre_1] = 0 = [x, \fre_{-1}]\big\}
   = \frA^{(-)}_0 \cap \rmZ(A){\bf 1}_N=  \begin{cases}
     \rmZ(A) {\bf 1}_N, & |N|< \infty, \\
       k {\bf 1}_N, & |N|= \infty .  \end{cases}
\end{equation}
\lv{
Proof of \eqref{rect-pvex-6}: Assume $x=(x_{ij})\in \frA_{11} \oplus
\frA_{00}$  commutes with $\fre_1 + \fre_{-1}$, whence by
\eqref{rect-pvex-0} with $\lsl_N(A)$. For $k,l\in N$, $k\ne l$ we then get
$0 = [x, E_{kl}] = \sum_{r\in \NN} x_{rk} E_{rl} - \sum_{s\in N} x_{ls}
E_{ks}$. For $E_{rl} \ne E_{ks}$, i.e., $(rs) \ne (kl)$, we obtain $x_{rk}
= 0$ for $r\ne k$, and for $(rl)=(ks)$ we get $x_{kk} = x_{ll}$. Thus
$x=z{\bf 1}_N$ for some $z\in A$. For arbitrary $a\in A$ and again $k \ne
l$ we get $0 = [z{\bf 1}_N, a E_{kl}] = (za - az)E_{kl}$, so that $z\in
\rmZ(A)$ follows. Hence $x\in \frA^{(-)}_0 \cap \rmZ(A) {\bf 1}_N$. The
other inclusion is clear. The second equality with the case distinction is
immediate from (*).
}
Since $\fre$ is generated by $e_1$, $e_2$, $\fre_1$ and $\fre_{-1}$,
\eqref{rect-pvex-5} and \eqref{rect-pvex-6} imply
\begin{equation}\label{rect-pvex-9}
\frz(\fre) = \fre_0 \cap (\rmZ(A) \, {\bf 1}_N)
  =  \begin{cases}
     \fre_0 \cap (\rmZ(A) {\bf 1}_N), & |N|< \infty, \\
       k {\bf 1}_N, & |N|= \infty .  \end{cases}.
\end{equation}

For example, if $A=k$ is a field of characteristic $0$ and $|N|=n$ is
finite, we get $\fre = \gl_n(k)$, $\frz(\fre) = k {\bf 1}_n$ and $\tkk
\cong \lsl_n(k)$.

\subsection{The projective elementary group of a rectangular matrix pair}
\label{pvn} We use the notation of \ref{rect-pvex} and let
$V=\MM_{IJ}(A)$. The goal of this  subsection is to show that the group $\PE(V)$ is isomorphic to a central quotient of the elementary group $\EL(A)$. We put
\[ \EL(V) = \EL_N(A) \]
and call it the {\em elementary group of $V$\/}.  Since by \ref{exc} the
group $\EL_N(A)$ is generated by $\e_+(V^+) \cup \e_-(V^-)$ this agrees
with the definition of the elementary group of an arbitrary special Jordan
pair in \cite[\S2]{stab} or \cite[6.2]{LN}. We will identify the
Tits-Kantor-Koecher algebra $\tkk(V)= \tkk$ with $\fre/\frz(\fre)$ via the
isomorphism \eqref{rect-pvex1} induced by $\Psi \co \fre \to \tkk$. \sm

Any $g\in \frA\ti$ gives rise to an automorphism $\Ad g$ of $\frA^-$
defined by $(\Ad g)(x) = gx g\me$. If $\Ad g$ stabilizes the subalgebra
$\fre$ of $\frA^-$, it also stabilizes $\frz(\fre)$, and therefore descends
to an automorphism $\uAd (g)$ of $\fre/\frz(\fre)= \tkk$ satisfying
$\Psi\circ (\Ad g |_\fre) = (\uAd g) \circ \Psi$. The map
\[ \uAd \co \{ g \in \frA\ti : (\Ad g)(\fre) = \fre\} \to \Aut (\tkk), \qquad
g \mapsto \uAd g
\]
is a group homomorphism. For $g=\e_+(x) = \big(
\begin{smallmatrix} 1 & x \\ 0 &1 \end{smallmatrix}\big) \in \frA\ti$ as in \eqref{exc0},
the automorphism $\Ad \e_+(x)$ acts as follows:
\begin{align*}
  \big( \Ad \e_+(x) \big)\, \pma 0  b  0 0  &= \pma 0  b 0 0 , \\
 \big( \Ad  \e_+(x) \big)\, \pma a  0 0 d   &= \pma a {-ax + xd} 0 d
  = \pma  a 0 0 d  +  \Big[ \pma 0 x 0 0  \, , \, \pma a 0 0 d \Big], \\
 \big( \Ad \e_+(x)\big)\, \pma 0 0 {-c} 0 &= \pma {-xc} {xcx} {-c} {cx} \\ & =
    \pma 00{-c} 0 + \Big[\pma 0 x  0 0 \, , \, \pma 0 0 {-c} 0 \Big]
         + \pma 0 {Q_xc} 0 0.
 \end{align*}
These equations show that the automorphism $\Ad \e_+(x)$ stabilizes $\fre$
and,  by comparison with \eqref{pev3}, that $\uAd \e_+(x) = \exp_+(x)$. One
proves in the same way that $\Ad \e_-(y)$, $y\in V^-$, stabilizes $\fre$
and that  $\uAd \e_-(y) = \exp_-(y)$. Since
$\PE(V)$ is generated by $\exp_+(V^+) \cup \exp_-(V^-)$, the
homomorphism $\uAd$ restricts to a surjective group homomorphism
\[ \uAd_{\EL} \co \EL(V) \to \PE(V), \quad g \mapsto \uAd g.\]
We claim that its kernel is the centre $\rmZ(\EL(V))$ of $\EL(V)$:
\begin{align} \label{pvn-1}
 \rmZ(\EL(V)) = \Ker \Ad |_{\EL(V)} &= \Ker \uAd_{\EL}, \qquad \hbox{whence}
 \\   \EL(V)/ \rmZ(\EL(V)) &\cong \PE(V).
\end{align}
Proof of \eqref{pvn-1}: Clearly, $\rmZ(\EL(V)) = \Ker \Ad |_{\EL(V)}
\subset \Ker \uAd_{\EL}$, so it remains to show that $g\in \Ker \uAd_{\EL}$
is central in $\EL(V)$. Let $g=\sma abcd$ and $g\me = \sma
{a'}{b'}{c'}{d'}$. Then
\begin{align}
  g \pma {{\bf 1}_I}000 g\me &= \pma {aa'}{ab'} {ca'} {cb'},\label{pvn-2}
   \\
   g \pma 000{{\bf 1_J}}  g\me &= \pma {bc'}{bd'} {dc'} {dd'} ,  \label{pvn2a}\\
  g\me \pma 0x00 g &= \pma {a'xc}{a'xd}{c'xc}{c'xd}, \label{pvn-3} \\
  g\me \pma 00y0 g &= \pma {b'ya}{b'yb}{d'ya}{d'yp}. \label{pvn-4}
\end{align}
Since 
$(\Ad g)\, (\sfm) \equiv \sfm  \equiv (\Ad g \me)(\sfm) \mod \frz(\fre)$ 
for all $\sfm \in \fre$ and since $\frz(\fre)$ is diagonal by
\eqref{rect-pvex-9}, it follows from \eqref{pvn-2},  \eqref{pvn2a} and
\eqref{pvn-4} that
\[
ab'= 0 = ca' = bd' ,  \qquad d'ya= y \hbox{ for $y \in
V^-$}.
\]
 Applied to $c\in V^-$,  this proves $cb' = (d'ca)\cdot b' = d'c
\cdot ab' = 0$ and then $bc' = b \cdot ( d'c'a) = bd' \cdot c'a = 0$. From
${\bf 1}_N = g g\me$ we obtain
\[ \pma{ {\bf 1}_I}00{{\bf 1}_J} = \pma{ aa' + bc'} *** = \pma{aa'}***.\]
Together with the already established equations this shows, using
\eqref{pvn-2}, that $(\Ad g)(e_1) = e_1$. Because $g\in \frA$, we get
$b=0=c$ from \eqref{rect-pvex-5}. Thus also $b'=0=c'$, so that
\eqref{pvn-3} and \eqref{pvn-4} prove that $\Ad g$ fixes $\fre_{\pm 1}$.
Since $\e_\pm(V^\pm) = {\bf 1}_N + \fre_{\pm 1}$, we see that $\Ad g$ fixes
the generators $\e_{\pm} (V^\pm)$ of $\EL(V)$, i.e. $g$ is central. \qed
\sm

A similar result holds for any special Jordan pair $V$: there always
exists a surjective group homomorphism from the elementary group $\EL(V)$
(which we have not defined) onto the projective elementary group $\PE(V)$,
whose kernel is central, but not necessarily the centre of $\EL(V)$, see
\cite[Thm.~2.8]{stab}.

\section{Some open problems}\label{sec:open}
We describe some open problems for Steinberg and projective elementary
groups of Jordan pairs. Our list is very much limited by the author's taste
and knowledge. This section requires some expertise in Jordan pairs.

\subsection{The normal subgroup structure of $\PE(V)$}\label{nsp} The problem
is quite easily stated: {\it  Given a Jordan pair $V$, describe all normal
subgroups of $\PE(V)$.} As stated, this may be too general. We therefore discuss some special cases. \sm

\begin{inparaenum}[(a)]
\item \label{nsp-a} In view of the results of \cite{simp} it is natural to
    ask: when is $\PE(V)$ a perfect group, when is it simple? Indeed,
    \cite[Thm.~2.6]{simp} says that, for a nondegenerate Jordan pair $V$
    with dcc on principal ideals, $\PE(V)$ is a perfect group if and only if
    $V$ has no simple factors isomorphic to $(\FF_2, \FF_2)$, $(\FF_3,
    \FF_3)$ or $\big(\rmH_2(\FF_2), \rmH_2 (\FF_2)\big)$. Here $\FF_q$ is
    the field with $q$ elements. Also, by \cite[Thm.~2.8]{simp}, $\PE(V)$
    is a simple (abstract) group if and only if $V$ is simple and not
    isomorphic to $(\FF_2, \FF_2)$, $(\FF_3, \FF_3)$ or
    $\big(\rmH_2(\FF_2), \rmH_2 (\FF_2)\big)$ (\cite[Thm.~2.8]{simp}). That
    the exceptional cases have to be excluded in these two theorems is
    evident from the isomorphisms $\PE(\FF_2, \FF_2) \cong \frS_3$ (the
    symmetric group on three letters), $\PE(\FF_3, \FF_3) \cong \frA_4$ (the
    alternating group on four letters), and $\PE\big( \rmH_2(\FF_2),
    \rmH_3(\FF_2)\big) \cong \frS_6$. Thus, the problem is to find out if
    these two theorems of \cite{simp} hold for more general Jordan pairs.
    It follows from \eqref{nsp-b} that is natural to assume simplicity of
$V$ for the second theorem. \sm

\item\label{nsp-b}  Every ideal $I$ of the Jordan pair $V$ gives rise to a
    normal subgroup of $\PE(V)$. Indeed, one can show (\cite[7.5]{LN}) that
    the canonical map $\can \co V \to V/I$ induces a surjective group
    homomorphism $\PE(\can) \co \PE(V) \to \PE(V/I)$. We let $\PE(V,I)$ be
    its kernel:
$$ \xymatrix@C=35pt{1 \ar[r] &\PE(V,I) \ar[r] &\PE(V) \ar[r]^{\PE(\can)} & \PE(V/I) \ar[r]& 1}
$$
Problem: describe $\PE(V,I)$ by generators and relations. For elementary
linear groups over rings this is a standard result, see for example
\cite[Lemma~3]{hazvav}. The paper \cite{ck:ann} shows that even in case
$\SL_2(A)$ one needs methods from Jordan algebras.
\end{inparaenum}

\subsection{Central closedness of $\St(V,\R)$ in low ranks}\label{cenco}
We have excluded low rank cases in Theorem~\ref{thmA} for the simple reason that it is not true without further assumptions in low ranks. We
discuss $2 \le \rank R \le  4$ in (a) and $\rank R=1$ in (b). \sm

(a) One knows (\cite[27.11]{LN}) that $\St(V,\R)$ is a classical linear or
unitary Steinberg group. Let us first consider the case that $V$ is defined
over a field $F$ and that $\dim_F V_\al = 1$ for all $\alp \in R_1$. Then
\cite[Thm.~1.1]{st3} applies and yields that $\St(V,\R)$ is not centrally
closed if and only if $(R, R_1)$ and $F$ satisfy one of the following
conditions.
\begin{equation} \vcenter{\offinterlineskip
\def\strut{\vrule height 14pt depth 6pt width 0pt}%
\def\bigstrut{\vrule height 18pt depth  12pt width 0pt}%
\everycr={\noalign{\hrule}}%
\halign{
\strut\vrule\hfil$\quad #\quad$\hfil\vrule\vrule 
&&\hfil$\;\;#\;\;$\hfil\vrule
\cr                                    
(R, R_1)& \rmaf_2^1 &  \rmaf_3^1 \hbox{ or } \rmaf_3^2 & \rmc_2\her & \rmb_3\qf &
  \rmc_3\her &  \rmd_4\alt
\cr
|F| & 2,4 & 2 &  2 & 3 & 2 & 2 \cr } }
\label{cenclo2}
\end{equation}
In this table we use the abbreviation $\rmaf_2^1= \rmaf_N^I$ for $|N|=2$,
$|I|=1$ and analogously for $\rmaf_3^1$, \dots, $\rmd_4\alt$. The cases
$R=\rmaf_4, \rmb_4, \rmc_4$ do not appear in the table because in these
cases $\St(V,\R)$ is centrally closed, as mentioned in \ref{thmA}. \sm

Still assuming that $V$ is defined over a field $F$, it is natural to
replace the assumption $\dim_F V_\al = 1$ by the requirement that the
fully idempotent root grading $\R$ of $V$ is a {\it division grading\/}
in the sense that all root spaces $V_\al$ are Jordan division pairs,
which means that for every non-zero $x\in V_\al\si$  the endomorphism
$Q(x)|V_\al\msi$ is invertible.
Preliminary investigations lead us to conjecture:
\begin{enumerate}
\item[(C)] {\em If\/ $\St(V,\R)$ is not centrally closed then $\dim_F     V_\al= 1$ for all $\alp \in R_1$ and $F$ satisfies the restrictions
    of table \eqref{cenclo2}. }
\end{enumerate}
\sm

(b) $R=\rmaf_1$: As in (a) we assume that $\R$ is a division grading, i.e.,
$V$ is a division pair and is therefore isomorphic to the Jordan
    pair $(J,J)$ of a division Jordan algebra $J$. By \cite[9.13]{LN} this
    is equivalent to $\PE(V)$ being a rank one group in the sense of
    \cite{lrone}. 
    Since the grading is trivial, $\St(V,\R)$ is the free product of the
    abelian groups $V^+$ and $V^-$, which is not perfect in general, a
    necessary condition for a group to be centrally closed
    (\ref{centex}\eqref{centex-a}). Following the example of Chevalley
    groups \cite{st2}, it seems more promising to consider the group
    $\St(J)$ defined by the following presentation:

\begin{itemize}\item generators $\xs(a)$, $a\in J$, $\sig=\pm$ and, putting
       \[ \rmw_b = \xm(b\me)\,\xp(b) \, \xm(b\me)\] for $0\ne b\in J$,

  \item relations \begin{align*}   \xs(a+b) &= \xs(a)\, \xs(b) \hbox{ for
      $a,b\in J$ and} \\
  \rmw_b \,\xm(a) \, \rmw_b\me &= \xp\big( U(b) a) \hbox{ for all $a\in
J$ and all $0\ne b\in J$.}
\end{align*}
\end{itemize}
We remark that $\St(J)$ is the Steinberg group $\St(V, \scS)$ of
    \cite[13.1]{LN}, where $\scS$ is the set of all non-zero idempotents of
    $V$. By \cite[13.6]{LN}, $\St(J)$ is the classical Steinberg group $\St(A)$ in  case $V=(A,A)$ and $A$ an associative division algebra. \sm

To motivate our conjecture in this case, let us first consider the special
case $J=\FF_q$. Since by \ref{nsp}\eqref{nsp-a} the group $\PE(V)$ is not
perfect in case $J=\FF_q$, $q=2,3$, these cases have to be excluded.
Moreover, by \cite[Th.~1.1]{st3}, $\St(J)$ is not centrally closed in case
$V =(\FF_q, \FF_q)$ and $q\in \{4,9\}$, but these values of $q$ are the only exceptions for $V=(F, F)$, $F$ a field. This leads us to ask:
\begin{itemize}
\item[(Q)] {\it Is\/ $\St(J)$ centrally closed whenever $J\ne \FF_q$ with
    $q\in \{2,3,4, 9\}$ ?}
\end{itemize}
There exists an example of an associative unital $\FF_5$-algebra $A$ for which $\St(A)$ is not centrally closed \cite[Ex.~4]{stro}, but $A$ is not a division algebra.

\subsection{Centrality of $\Ker(\pi)$.} Let $\pi \co \St(V,\R) \to \PE(V)$ be the
homomorphism of Theorem~\ref{thmB}\eqref{thmB-a}. For simplicity, let us
assume that $R$ is irreducible. If $R$ has infinite rank, part
\eqref{thmB-b} of \ref{thmB} says that $\pi$ is a universal central
extension. The problem here is: {\em find sufficient conditions for $\Ker (\pi)$ to be central if $R$ has finite rank.}

Some special cases are known. For example, if $V$
is split in the sense of \cite{n:poly}, centrality of $\Ker(\pi)$ is established in \cite{vdK-ap}, \cite{lavrenov}, \cite{lav-sin} and \cite{sinch} for
$\rank  \ge 3$. The quoted papers all use the same method, pioneered by
\cite{vdK-ap}, namely a ``basis-free presentation of $\St(V,\R)$''. {\it
Can the method of\/ \cite{vdK-ap} be generalized to treat $\St(V,\R)$, $V$
split root graded, in a case-free manner?}

For a slightly different type of Steinberg group and a unit regular $V$,
centrality of $\Ker(\pi)$ is shown in \cite[Th.~1.12]{simp}.

\comments{ More problems in `steinprob.tex': Analogue of Matsumoto's Theorem, simply connected versions of $\PE(V)$}



\begin{thebibliography}{LN2}

\bibitem[Ba]{bak} A.~Bak, {\it {K}-theory of forms}, Ann. of Math. Studies,
    vol.~98, Princeton  University Press, 1981.

\bibitem[Bo]{brac} N.~Bourbaki, {\em Groupes et alg\`ebres de Lie,
chapitres  4--6}, Masson, Paris, 1981.

\bibitem[CS]{CS} C.~Chevalley and R. D.~Schafer, {\em The exceptional simple
    Lie algebras $\rmf_4$ and $\rme_6$}, Proc.~Nat.~Acad.~Sci.~ U.S.A.
    \textbf{36} (1950), 137-- 141.


\bibitem[CK]{ck:ann} D.~L. Costa and G.~E. Keller, {\it The {$E(2,A)$}
    sections of {${\rm
  SL}(2,A)$}}, Ann. of Math. (2) {\bf 134} (1991), no.~1, 159--188.


\bibitem[HO]{hahn}  A.~J.~Hahn and O.~T.~O’Meara, {\em The classical groups and
    K-theory}, Grundlehren, vol. \textbf{291}, Springer-Verlag, 1989.


\bibitem[HVZ]{hazvav} R.~Hazrat, N.~Vavilov, and Z.~Zhang, {\it The
    commutators of classical groups},
  Zap. Nauchn. Sem. S.-Peterburg. Otdel. Mat. Inst. Steklov. (POMI) {\bf
  443} (2016), no.~Voprosy Teorii Predstavleni\u\i \ Algebr i Grupp. 29,
  151--221.


\bibitem[Hu]{hum} J.~Humphreys, {\em Introduction to Lie algebras and
    Representation Theory}, Graduate Texts in Mathematics vol.~\textbf{9},
    Springer-Verlag, New York, 1972.

\bibitem[Ja1]{Ja1} N.~Jacobson, {\em Some groups of transformations defined
    by Jordan algebras. I}, J.~Reine~Angew.~Math. \textbf{201} (1959),
    178--195.

\bibitem[Ja2]{Ja2} \bysame, {\em Some groups of transformations defined by
    Jordan algebras. II. Groups of type F4}, J.~Reine~Angew.~Math. \textbf{
    204} (1960), 74-- 98.

\bibitem[Ja3]{Ja3} \bysame, {\em Some groups of transformations defined by
    Jordan algebras. III,} J.~Reine~Angew.~Math. \textbf{207} (1961),
    61--85.

\bibitem[Ja4]{jake:tata} \bysame, {\em Lectures on quadratic Jordan algebras}, Tata Institute of Fundamental Research, 1969.

\bibitem[Ja5]{jake:ark} \bysame, {\em Structure theory of Jordan algebras}, Lecture Notes in Math., vol. \textbf{5}, The University of Arkansas, 1981.

%

\bibitem[Ka1]{Ka1} I.~L.~Kantor, {\em Classification of irreducible
    transitive differential groups}, Dokl.~Akad.~Nauk SSSR \textbf{158}
    (1964), 1271--1274.

\bibitem[Ka2]{kan:trans} \bysame, {\it Non-linear groups of
    transformations defined by general norms of
  {J}ordan algebras}, Dokl. Akad. Nauk SSSR {\bf 172} (1967), 779--782.

\bibitem[Ka3]{kan:cert} \bysame, {\it Certain generalizations of {J}ordan
    algebras}, Trudy Sem. Vektor.
  Tenzor. Anal. {\bf 16} (1972), 407--499.

\bibitem[Ke]{kerv} M.~Kervaire, {\em Multiplicateurs de Schur et
    K-th\'eorie}, Essays on Topology and Related Topics (M\'emoires
    d\'edi\'es \`a Georges de Rham), Springer, New York, 1970, pp. pp
    212--225.

\bibitem[Ko1]{koe:imbedI} M.~Koecher, {\it Imbedding of {J}ordan algebras
    into {L}ie algebras. {I}},
  Amer. J. Math. {\bf 89} (1967), 787--816.

\bibitem[Ko2]{Ko2} \bysame, {\em \"Uber eine Gruppe von rationalen
    Abbildungen}, Invent. Math. \textbf{3} (1967), 136--171.

\bibitem[Ko3]{Ko3} \bysame, {\em Imbedding of Jordan algebras into Lie
    algebras. II}, Amer.~J.~Math. \textbf{90} (1968), 476--510.

\bibitem[Ko4]{Ko69} \bysame, {\em Gruppen und Lie-Algebren von rationalen
    Funktionen}, Math.~Z. \textbf{109} (1969), 349--392.

\bibitem[La]{lavrenov} A.~Lavrenov, {\it Another presentation for
    symplectic {S}teinberg groups}, J.
  Pure Appl. Algebra {\bf 219} (2015), no.~9, 3755--3780.

\bibitem[LS]{lav-sin} A. Lavrenov and S. Sinchuk, {\it On  centrality of even orthogonal
  {${\rm K}_2$}}, J. Pure Appl. Algebra {\bf 221} (2017), no.~5,
  1134--1145.

\bibitem[Lo1]{jp} O.~Loos, {\em Jordan pairs}, Springer-Verlag, Berlin,
    1975, Lecture Notes in Mathematics, vol \textbf{460}.

\bibitem[Lo2]{hav} \bysame, {\it Homogeneous algebraic varieties defined by
    {J}ordan pairs}, Mh.~Math. {\bf 86} (1978), 107--129.

\bibitem[Lo3]{ag} \bysame, {\it On algebraic groups defined by {J}ordan
    pairs}, Nagoya Math.~J.  {\bf 74} (1979), 23--66.

\bibitem[Lo4]{stab} \bysame, {\it Elementary groups and stability for
    {J}ordan pairs}, K-Theory {\bf 9} (1995), 77--116.

\bibitem[Lo5]{simp} \bysame, {\it Steinberg groups and simplicity of
    elementary groups defined by
  {J}ordan pairs}, J. Algebra {\bf 186} (1996), no.~1, 207--234.

\bibitem[Lo6]{lrone} \bysame, {\it Rank one groups and division pairs},
    Bull. Belg. Math. Soc. Simon
  Stevin {\bf 21} (2014), no.~3, 489--521.

\bibitem[LN1]{lfrs} \bysame, {\em Locally finite root systems},
    Mem.~Amer.~Math.~Soc. \textbf{171} (2004), no.~811.

\bibitem[LN2]{LN} \bysame, {\em Steinberg groups for Jordan pairs},
version August 2018, available at http://agt2.cie.uma.es/~loos/homepage/.

\bibitem[LPR]{lopera} O.~Loos, H.~P.~Petersson and M.~L.~Racine, {\em Inner
    derivations of alternative algebras over commutative rings},
   Algebra \& Number Theory, \textbf{2}, (2008), 927--968.

\bibitem[Ma]{magurn} B.~Magurn, {\em An algebraic introduction to $K$-Theory}, Encyclopedia of Mathematics and Its Applications \textbf{87}, Cambridge University Press 2002.

\bibitem[Mi]{milnor} J.~Milnor, {\em  Introduction to algebraic K-theory},
    Ann.~of Math.~ Studies, vol. 72, Princeton University Press, 1971.


\bibitem[Ne1]{n:grids} E.~Neher, {\it Jordan triple systems by the grid
    approach}, Lecture Notes in  Math., vol. \textbf{1280},
    Springer-Verlag, 1987.


\bibitem[Ne2]{n:cr} E.~Neher, {\em Syst\`emes de racines $3$-gradu\'es},
    C.~R.~Acad.~Sci.~Paris S\'er. I \textbf{310} (1990), 687--690.

\bibitem[Ne3]{n:3g} \bysame, {\it Lie algebras graded by 3-graded root
    systems and {J}ordan pairs
  covered by a grid}, Amer. J. Math {\bf 118} (1996), 439--491.

\bibitem[Ne4]{n:poly} \bysame, {\em Polynomial identities and nonidentities
    of split Jordan pairs}, J. Algebra \textbf{211} (1999, 206--224.


\bibitem[Ro]{Ro} J.~Rosenberg, {\em Algebraic K-theory and its applications},
    Graduate Texts in Mathematics, vol. \textbf{147}, Springer-Verlag, New York, 1994.

\bibitem[Si]{sinch} S.~Sinchuk, {\it On centrality of {$\rm K_2$} for
    {C}hevalley groups of type {$\rme_\ell$}}, J. Pure Appl. Algebra {\bf
  220} (2016), 857--875.

\bibitem[Sp]{Sp} T.~A.~Springer, {\em Jordan algebras and algebraic
    groups}, Springer-Verlag, New York, 1973, Ergebnisse der Mathematik und
    ihrer Grenzgebiete, vol~\textbf{75}.

\bibitem[SV]{SV} T.~A.~Springer and F.~D.~Veldkamp, {\em Octonions, Jordan
    algebras and exceptional groups}. Springer Monographs in Mathematics.
    Springer-Verlag, Berlin, 2000

\bibitem[St1]{st1} R.~Steinberg, {\em G\'en\'erateurs, relations, et
     rev\^etements de groupes alg\'ebriques}, Colloq. Th\'eorie des Groupes Alg\'ebriques (Bruxelles), 1962, 113-–127.

\bibitem[St2]{st2} \bysame, {\em Lectures on Chevalley groups}, Yale
    University Lecture Notes, New Haven, Conn., 1967.

\bibitem[St3]{st3} \bysame, {\em Generators, relations and coverings of
    algebraic groups II}, J.~Algebra \textbf{71} (1981), 527-–543.

\bibitem[Str]{stro} J.~R. Strooker, {\it The fundamental group of the
    general linear group}, J.
  Algebra {\bf 48} (1977), 477--508.

\bibitem[Ti1]{Ti0} J.~Tits, {\em Une classe d’alg\`ebres de Lie en relation
    avec les alg\`ebres de Jordan}, Nederl.~Akad.~Wetensch.~Proc.~Ser.~A
    \textbf{65} = Indag.~Math. \textbf{24} (1962), 530--535.

\bibitem[Ti2]{tits:alt} J.~Tits, {\it Alg\`ebres alternatives, alg\`ebres
    de {J}ordan et alg\`ebres de
  {L}ie exceptionnelles. {I}. {C}onstruction}, Nederl. Akad. Wetensch.
  Proc. Ser. A 69 = Indag. Math. {\bf 28} (1966), 223--237.

\bibitem[vdK]{vdK-ap} W.~van~der Kallen, {\it Another presentation for
    {S}teinberg groups}, Nederl.  Akad. Wetensch. Proc. Ser. A {\bf 80} =
    Indag. Math. {\bf 39} (1977), 304--312.

\bibitem[We]{weibel} C.~Weibel, {\em The $K$-bbok: an introduction to algebraic $K$-theory}, Graduate Studies in Mathematics \textbf{145}, AMS 2013.
\end{thebibliography}
\end{document}